\documentclass[a4paper,12pt]{amsart}

\usepackage{amssymb,amsmath,latexsym,amsthm}
\usepackage{datetime}
\usepackage{xcolor}
\usepackage{tikz}
\usepackage{float}
\usetikzlibrary{calc,arrows.meta}

\usepackage[pdfauthor={Dimitrios Chatzakos},
pdftitle={},
            pdfkeywords={L-functions},
             pdfcreator={Pdflatex}]
{hyperref}
\hypersetup{colorlinks=true}
\usepackage{xcolor}
\usepackage{enumerate}

\usepackage[english]{babel}
\usepackage{todonotes} %To add notes

\theoremstyle{plain}
\newtheorem{theorem}{Theorem}[section]

\newtheorem{lemma}[theorem]{Lemma}

\newtheorem{corollary}[theorem]{Corollary}
\newtheorem{proposition}[theorem]{Proposition}

\newtheorem*{rem*}{Remark}

\theoremstyle{definition}

\newtheorem{remark}[theorem]{Remark}

\def\MR#1{\href{http://www.ams.org/mathscinet-getitem?mr=#1}{MR#1}}

\numberwithin{equation}{section}

\DeclareMathOperator{\tr}{tr}

\newcommand{\Z}{{\mathbb Z}}

\newcommand{\R}{{\mathbb R}} 
\newcommand{\C}{{\mathbb C}}

\def \slnz  {{\hbox{SL}(n, {\mathbb Z})} }

\def \fpsi {{\hat{\psi}_{\delta}}}

\def \e     {{\epsilon}}
\def \k     {{\kappa}}
\def \m     {{\mu}}
\def \r     {{\rho}}
\def \t     {{\tau}}
\def \s     {{\sigma}}
\def \b     {{\beta}}

\title[Higher rank lattice counting]{Pointwise improvement of the error term in the higher rank hyperbolic lattice counting problem}

\author[Chatzakos]{Dimitrios Chatzakos}
\address{
   Department of Mathematics,
   University of Patras,
   26 504, Patras,
   Greece
}
\email{dchatzakos@math.upatras.gr}

\author[Dimakis]{Panagiotis Dimakis}
\address{
Department of Mathematics, 
University of Maryland,
College Park 20740, MD, 
USA.
}
\email{pdimakis12345@gmail.com}

\author[Katsivelos]{Christos Katsivelos}
\address{
   Department of Mathematics,
   University of Patras,
   26 504, Patras,
   Greece
}
\email{c.katsivelos@upatras.gr}

\date{\today}
\subjclass[2020]{Primary
   11P21  
   11N45  
   11F67; 
   Secondary
   11E45, 
   11M32} 

\keywords{}

\begin{document}

\begin{abstract}
We prove an upper bound for the error term in the higher rank hyperbolic lattice counting problem for ${\hbox{SL}(n, \mathbb{Z})}$. Blomer and Lutsko achieved recently a significant improvement of previous known results by Duke--Rudnick--Sarnak and Gorodnik--Nevo--Yehoshua. We deduce an improved upper bound for the error which holds for almost all pairs $z,w \in {\hbox{SL}(n, \mathbb{R})}$. We also derive a $\Omega$-result for the error term, in the spirit of Phillips--Rudnick. 
\end{abstract}

\maketitle

\section{Introduction}

\subsection{Background and general setting} %\label{subsection1.1}

The hyperbolic lattice counting problem is an important problem in the analytic theory of automorphic forms. For the case of cofinite lattices $\Gamma$ acting on classical hyperbolic spaces $\mathbb{H}^n$ this problem has been extensively studied, using the spectral theory of automorphic forms in Lie groups of rank one, see \cite{biro, biro2, cham1, cham2, chatz, chatz3, chatzakosdimakis, cherubini, cherubinikatsivelos, gunther, Iwaniec2002, katsivelos, laxphillips, patterson, PetridisRisager:2018a, PhillipsRudnick:1994, selberg}. This problem can also be understood as a counting problem for matrices in $\Gamma$ ordered by their Frobenius norm. 

In this work we are interested in the lattice counting problem for the higher rank case. This problem has also attracted considerable interest, see \cite{blomerlutsko, Dukerudnicksarnak, EskinMcMullen, GNY}. For $n \geq 3$ let $G={{\hbox{SL}(n,{\mathbb R})} }$, $K = {{\hbox{SO}(n,{\mathbb R})} }$ the maximal compact subgroup of orthogonal matrices and $\Gamma = {{\hbox{SL}(n,{\mathbb Z})} }$. In analogy with the rank one case, this problem can be studied using the spectral theory of automorphic forms on the symmetric space
\begin{equation*}
    X=\Gamma\backslash G/ K,
\end{equation*}
where $G/K$ is the \textit{generalized hyperbolic space} of rank $n-1$. Consider the Iwasawa decomposition $G=KAN=NAK$, where $N$ is the group of unipotent upper triangular matrices of determinant $1$, $A$ the group of diagonal matrices with positive entries and determinant $1$ and $K$ as above. We equip these groups with their respective Haar measures, normalized such that 
\begin{equation*}
\hbox{vol}(K)=1\quad \text{and denote}\quad V=\hbox{vol}(X).
\end{equation*}

As in \cite{blomerlutsko}, for any two fixed points $z,w \in G$ we are interested in the counting function given by
\begin{equation}\label{counting function}
    N_n(T;z,w)=\#\{\gamma\in\Gamma:\|z^{-1}\gamma w\|_F\leq T\}, \quad  
    \end{equation}
where by $\|\cdot\|_F$ we denote the Frobenius norm
\begin{equation*}
\|g\|_F=\sqrt{\tr{(gg^T)}}.
\end{equation*}
Instead of two points $z,w$ on the space $X$, we count representatives over the group $G$. This does not affect the counting due to the left $\Gamma$-invariance and also bi-$K$-invariance.  Moreover, the counting function $N_n(T;z,w)$ is well defined and finite for every $T >0$, since a Frobenius ball in $G$ is compact and $\Gamma$ is discrete.

We are interested in estimating the asymptotic behavior of \eqref{counting function} as $T \to \infty$. For that reason, let 
\begin{eqnarray} \label{sharperror}
E_n(T;z,w) :=  N_n (T;z,w)  - c_n T^{n(n-1)},
\end{eqnarray}
where the constant $c_n$ is given by
\begin{eqnarray*}
c_n = \frac{\pi^{n^2/2}}{\Gamma (\frac{n^2-n+2}{2}) \Gamma(\frac{n}{2}) \zeta(2) \cdot \cdot \cdot \zeta(n)}.
\end{eqnarray*}
Improving on previous results, Blomer and Lutsko proved strong upper bounds for the error term \eqref{sharperror}. Their main results are the following. 
\begin{theorem}[Blomer--Lutsko \cite{blomerlutsko}] \label{blomerlutsko1}
Let $n \geq 3$ and $z,w \in G$. Then for every $\epsilon >0$, as $T \to \infty$:
\begin{eqnarray*}
 E_n (T;z,w)  = O_{n, z, w,\epsilon} \left(T^{n(n-1) - \delta_n +\epsilon}\right),
\end{eqnarray*}
where the exponent $\delta_n = \delta_{n}^{BL} $ is given by $\delta_3^{BL} = 1$, $\delta_4^{BL} = 6/5$, whereas for $n\geq 5$:
\begin{equation*} 
\delta_{n}^{BL} = \frac{n}{2n-1 - \sqrt{2n^2 - 10n -4}} = 1 + \frac{1}{\sqrt{2}} + O \left( \frac{1}{n}\right).
\end{equation*}
\end{theorem}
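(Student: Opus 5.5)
The plan is the standard spectral approach: smooth the counting function, expand it spectrally on $L^2(\Gamma\backslash G/K)$, and balance the smoothing error against the spectral tail, inserting the best available bounds toward the Ramanujan conjecture for $\slnz$.

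\textbf{Smoothing and spectral expansion.} Write $N_n(T;z,w)=\sum_{\gamma\in\Gamma} k_T(z^{-1}\gamma w)$, where $k_T$ is the indicator of the bi-$K$-invariant Frobenius ball $\{g:\|g\|_F\le T\}$. I would replace $k_T$ by $k_T^{\pm}=k_{T(1\pm\eta)}\ast\psi_\eta$, where $\psi_\eta$ is a bi-$K$-invariant bump supported in a ball of radius $\eta$ around the identity. These mollified kernels sandwich $N_n(T;z,w)$ up to the replacement $T\mapsto T(1\pm\eta)$. The resulting smoothing error is $O(\eta T^{n(n-1)})$.

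The automorphic kernel $\sum_\gamma k_T^\pm(z^{-1}\gamma w)$ is then expanded via Langlands' spectral decomposition. The expansion has three parts: the constant function, which produces the main term $c_nT^{n(n-1)}$ after computing the volume of the Frobenius ball in $G$ and $V=\zeta(2)\cdots\zeta(n)\cdot(\text{const})$; the cusp forms; and Eisenstein series, including the residual spectrum and the degenerate Eisenstein series induced from smaller Levi subgroups. Each spectral piece contributes $\hat k_T^\pm(\mu_\phi)\,\phi(z)\overline{\phi(w)}$, where $\hat k$ is the Harish-Chandra spherical transform evaluated at the spectral parameter $\mu_\phi\in\mathfrak a^*_{\C}$.

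\textbf{Bounding the spherical transform.} For tempered parameters $\mu\in i\mathfrak a^*$, I would estimate $\hat k_T(\mu)$ by stationary phase in the $KAK$ decomposition. The target bound is roughly $T^{n(n-1)/2}$, coming from the $\rho$-shift, multiplied by a decay factor in $\|\mu\|$, and the convolution with $\psi_\eta$ contributes a factor $(1+\eta\|\mu\|)^{-A}$. Non-tempered parameters, namely the residual Eisenstein series and any possible cusp form failing Ramanujan, have real parts $\Re\mu$. These contribute $T^{n(n-1)/2+\langle\Re\mu,\cdot\rangle}$-type growth. Here I would insert the known bound toward Ramanujan for $\slnz$ (Luo--Rudnick--Sarnak) and the explicit exponents of the residual spectrum. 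The worst case is the most degenerate Eisenstein series, induced from the maximal parabolic of type $(n-1,1)$. Its contribution must be handled by hand, and I expect this to dictate the exponent for large $n$.

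\textbf{Summing and optimizing.} The cuspidal and continuous contributions are summed using the Weyl law for $\slnz$ together with a local Weyl law $\sum_{\|\mu_\phi\|\le M}|\phi(z)|^2\ll_z M^{n-1+\dots}$. Combining this with the transform bound and the decay $(1+\eta\|\mu\|)^{-A}$ gives a spectral error $T^{a}\eta^{-b}$. Balancing it against the smoothing error $\eta T^{n(n-1)}$ yields the saving $\delta_n$.

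\textbf{Main obstacle.} The hardest step is obtaining sharp uniform asymptotics for the spherical transform of the Frobenius ball near the walls of the Weyl chamber, and treating each degenerate Eisenstein family separately, each with its own non-tempered shift. The square root in $\delta_n^{BL}$ for $n\ge5$ suggests that the optimal choice balances a degenerate-spectrum contribution against the tempered tail via a quadratic optimization in $\eta$. The small cases $n=3,4$ would instead be settled by direct inspection of these competing terms.
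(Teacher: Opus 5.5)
\textbf{Verdict: genuine gap.} Your outline has the right framework, but the step that produces the specific exponent is missing, and the mechanism you sketch for it points the wrong way.

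The paper does not reprove this theorem. It quotes it from Blomer--Lutsko and imports their ingredients: Lemma~\ref{mollifier decay properties}, the Mellin formula \eqref{spherical} with the $q=0$ case of Proposition~\ref{new transform bound}, Proposition~\ref{local spectral density}, and the M\oe glin--Waldspurger parametrization \eqref{entries}. That is indeed your framework. But the content of the theorem is the value of $\delta_n$, and your step ``this gives a spectral error $T^{a}\eta^{-b}$; balancing yields $\delta_n$'' is where the proof lives. As you set it up, it cannot produce $\delta_n^{BL}$.

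The missing idea is that large real parts occur only on thin parts of the spectrum. By \eqref{entries} and unitarity $\{\mu_j\}=\{-\overline{\mu_j}\}$, a coordinate of real part $\beta>0$ comes with a block of roughly $2\beta$ or more coordinates sharing the same imaginary part. For example, a residual block of size $m$ has $\beta=(m-1)/2$, but its family has only about $n-m$ free parameters, and on it $\mathcal{W}$ loses the $m(m-1)/2$ collapsed differences. Each such family therefore has to be summed on its own. You apply Proposition~\ref{local spectral density} on balls centred on the corresponding walls, keep the anisotropic factor $\mathcal{G}(\tau)$ of \eqref{G definition}, balance the growth $T^{d/2+n\beta}$ against the much smaller spectral mass, and optimize over all partition data. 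You do mention treating degenerate families separately, but only through their real-part shift; the compensating loss of spectral dimension never enters your outline. If you put the worst real part $(n-2)/2$ from Proposition~\ref{non constant bound} into a single balance with the full local Weyl law, you get a saving of order $1/n$ (comparable to Gorodnik--Nevo--Yehoshua), not $1+1/\sqrt2+O(1/n)$.

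Your prediction of the dominant term is also wrong.
\begin{itemize}
\item The $(n-1,1)$ family, where $\beta=(n-2)/2$ is attained, is one-dimensional with $\tau=(t,\dots,t,-(n-1)t)$. So $\mathcal{W}\asymp U^{n-1}$ and $\mathcal{G}\asymp U^{-1/2}$ on it. For $n\ge4$ its spectral sum converges without any smoothing and contributes only $O(T^{d-n/2+\varepsilon})$, a saving of $n/2$.
\item For $n=3,4$ the tempered bulk decides. With $\mathcal{G}\mathcal{W}\ll U^{(n-1)^2/2}$ it gives the saving $2(n-1)/(n+1)$, which is exactly $1$ and $6/5$. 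If you replace $\mathcal{G}$ by its supremum $\asymp U^{-1/2}$, i.e.\ use a decay factor in $\|\mu\|$ alone, you get only $2n/(n+4)$, e.g.\ $6/7$ for $n=3$.
\item For large $n$, a heuristic version of the same count indicates that the extremal families carry a residual block of intermediate size $m\approx(1-1/\sqrt2)n$. Trading the extra growth $T^{n(m-1)/2}$ against the root factors that disappear from $\mathcal{W}$ gives a saving that is a ratio of a linear and a quadratic polynomial in $m$. Optimizing over $m$ is what produces the square root and the limit $1+1/\sqrt2$. Balancing in $\eta$ alone only ever yields rational exponents.
\end{itemize}

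Two smaller points. First, bounds toward Ramanujan are not what matters here: the decisive real parts come from residual blocks and are exact, and the Jacquet--Shalika bound \eqref{Jacquet Shalika} suffices. Second, the uniform transform bound does not come from stationary phase in $KAK$ coordinates. It comes from contour shifting in the exact Mellin representation \eqref{spherical}, which handles walls and non-tempered $\mu$ uniformly and removes what you identify as the main obstacle.
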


Moreover, using a local Weyl's law of M\"uller \cite{muller}, they achieved a better upper bound for the local average of the error term $E_n(T;z,w)$ over compact subsets of $G$. To state their second result, let $\Omega\subset G$ be a compact subset with respect to the Haar measure $d\mu(z)$.
\begin{theorem}[Blomer--Lutsko \cite{blomerlutsko}]      \label{blomerlutsko2}
Let $n \geq 3$. Then for every $\epsilon >0$, as $T \to \infty$:
\begin{equation*}
 \int_{\Omega} E_n  (T;z,z)  \ d \mu(z) = O_{n, \Omega,\epsilon} \left(T^{n(n-1) - \delta_n + \epsilon}\right)
\end{equation*}
with
\begin{equation} \label{blexponent2}
\delta_n = \frac{2 (n^2-2n)}{n^2+3n-2} = 2 + O \left( \frac{1}{n}\right).
\end{equation}
\end{theorem}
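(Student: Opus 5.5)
We prove the bound by smoothing the counting function, expanding the smoothed kernel spectrally, and then integrating over $\Omega$ so that the local Weyl law of M\"uller controls the spectral mass. First we fix a smooth, bi-$K$-invariant bump $\psi_\eta$ on $G$, supported in an $\eta$-neighbourhood of the identity, with total mass one. We let $k_T$ be the indicator of the Frobenius ball $\{\|g\|_F\le T\}$ and form the automorphic kernel
\[
K_T(z,w)=\sum_{\gamma\in\Gamma}(k_T*\psi_\eta)(z^{-1}\gamma w).
\]
Since the norm is submultiplicative up to constants, $N_n(T(1-c\eta);z,w)\le K_{T}(z,w)\le N_n(T(1+c\eta);z,w)$, and the same holds with the inequalities reversed after shifting $T$. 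Hence
\[
E_n=K_{T^\pm}-c_nT^{n(n-1)}+O\!\left(\eta\, T^{n(n-1)}\right).
\]
On the diagonal $z=w$ the count is nonnegative, so integrating these sandwich inequalities over $\Omega$ is legitimate.

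Next we expand $K_T(z,z)$ spectrally on $L^2(\Gamma\backslash G/K)$. The kernel becomes a sum over the constant function, Maass cusp forms $\phi_j$, and the Eisenstein contributions. Each term has the form $\widetilde{k_T*\psi_\eta}(\mu_j)\,|\phi_j(z)|^2$, where $\widetilde{\cdot}$ denotes the spherical (Harish-Chandra) transform.
\begin{itemize}
\item The constant term gives the main term $\frac{1}{V}\,\mathrm{vol}\{\|g\|_F\le T\}=c_nT^{n(n-1)}$, up to $O(\eta T^{n(n-1)})$.
\item The residual spectrum contributes lower order terms, which are absorbed in the same way.
\end{itemize}
For the remaining terms we use a factorisation of the transform:
\[
\widetilde{k_T*\psi_\eta}(\mu)=\tilde k_T(\mu)\,\tilde\psi_\eta(\mu).
\]
The factor $\tilde k_T(\mu)$ is bounded by a stationary phase analysis of the spherical function on the Frobenius sphere, giving roughly $T^{n(n-1)}\cdot T^{-\text{(decay)}}(1+\|\mu\|)^{-\alpha}$ for tempered $\mu$. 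The factor $\tilde\psi_\eta(\mu)$ is rapidly decaying beyond $\|\mu\|\gg\eta^{-1}$.

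We then integrate over $z\in\Omega$. The key input is M\"uller's local Weyl law. For spectral windows it gives
\[
\sum_{\|\mu_j\|\le \Lambda}\int_\Omega|\phi_j|^2\,d\mu(z)+(\text{Eisenstein})\ll_\Omega \Lambda^{d},\qquad d=\dim G/K=\frac{n(n+1)}{2}-1.
\]
The point is that pointwise sup-norm bounds on $\phi_j$ are no longer needed. We also need the worst-case bounds towards Ramanujan for non-tempered $\mu_j$, which enlarge $|\tilde k_T(\mu_j)|$. Because the $L^2$ mass is now controlled on average, however, the non-tempered cuspidal spectrum contributes at most a power saving governed by density estimates, and the Weyl law bounds this portion of the spectrum with the available exceptional-spectrum information. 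We then sum dyadically over shells $\|\mu\|\sim\Lambda$ up to $\Lambda\approx\eta^{-1}$, which gives a bound of the form $T^{n(n-1)-\beta}\eta^{-\gamma}$.

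Finally we balance $\eta T^{n(n-1)}$ against $T^{n(n-1)-\beta}\eta^{-\gamma}$, choosing $\eta=T^{-\beta/(1+\gamma)}$. The exponents $\beta$ and $\gamma$ come from the decay of $\tilde k_T$ (which is in terms of $T$, $n$ and $\|\mu\|$) together with the Weyl exponent $d$, and we expect the optimisation to produce exactly the rational function $\delta_n$ in \eqref{blexponent2}.

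The main obstacle is the uniform bound for $\tilde k_T(\mu)$ in the higher rank setting. The spherical function is a multivariate oscillatory integral, and the Frobenius ball is not a $K$-bi-invariant "radial" set of a single variable in the Cartan parameter. Degenerate directions near the walls of the Weyl chamber lose decay. We would first handle the region of generic $\mu$ by stationary phase, and then treat the walls with a cruder bound weighted by the smaller Plancherel measure there.
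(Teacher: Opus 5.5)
This result is quoted from Blomer--Lutsko and not proved in the paper. The natural benchmark is therefore the argument built from the tools reproduced here: the smoothing of Lemma~\ref{mollifier decay properties}, the transform bound of Proposition~\ref{new transform bound}, and M\"uller's local Weyl law. Your skeleton is the right one: smooth, expand on the diagonal, integrate over $\Omega$ so only $\int_\Omega|u_\omega|^2$ enters, sum dyadically up to $\eta^{-1}$, and balance against $\eta T^{n(n-1)}$. As written, though, it does not prove the theorem. Both quantitative inputs that fix the exponent are missing, and the last step (``we expect the optimisation to produce exactly $\delta_n$'') is an assertion, not a derivation.

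\textbf{Missing input 1: the transform bound.} You flag the bound for $\tilde k_T(\mu)$ as the main obstacle and leave it at ``roughly $T^{n(n-1)}T^{-(\mathrm{decay})}(1+\|\mu\|)^{-\alpha}$''. No stationary phase is needed. The Frobenius ball is bi-$K$-invariant, since $\|k_1ak_2\|_F^2=\sum a_i^2$, and its Abel transform over $N$ is the volume of an explicit ellipsoid in the variables $n_{ij}$. This gives the Mellin--Barnes formula \eqref{spherical}. Shifting contours and applying Stirling then yields, with $d=n(n-1)$, $\beta(\mu)=\|\Re\mu\|_\infty$ and $U\ll T^{2-\kappa}$,
\begin{equation*}
|h_T(\mu)|\ll T^{d/2+n\beta(\mu)+\epsilon}\,U^{-d/4-1/2},
\end{equation*}
uniformly up to the walls. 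This is Proposition~\ref{new transform bound} with $q=0$, after discarding $U^{-\beta/2}$ and using $\mathcal{G}\le 2$.

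\textbf{Missing input 2: control of the real parts.} Here your text is vague and points at the wrong tool. The $T$-exponent is $d/2+n\beta(\mu)$, so the stated exponent needs no density theorem. It needs two things:
\begin{enumerate}
\item[(i)] the Jacquet--Shalika bound $\beta(\mu)\le 1/2$ on the cuspidal spectrum, which carries the full Weyl-law mass $\Lambda^{D}$ with $D=\dim G/K=d/2+n-1$;
\item[(ii)] a separate treatment of the residual and continuous families, whose $\beta(\mu)$ reaches $(n-2)/2$ (Proposition~\ref{non constant bound}).
\end{enumerate}
For example, the Eisenstein family induced from the trivial representation of $\mathrm{GL}_{n-1}\times\mathrm{GL}_1$ has one free imaginary parameter and transforms of size $T^{d-n/2}$. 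Such families are not ``lower order'' for free. You must use that they are lower-dimensional families with correspondingly smaller local mass on $\Omega$.

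\textbf{The computation your last step needs.} With these inputs, the dyadic sum up to $\eta^{-1}$ is $\ll T^{d/2+n/2+\epsilon}\eta^{-(d/4+n-3/2)}$. Balancing against $\eta T^{d}$ gives $\eta=T^{-\delta_n}$ with
\begin{equation*}
\delta_n=\frac{(d-n)/2}{d/4+n-1/2}=\frac{2(n^2-2n)}{n^2+3n-2}.
\end{equation*}
This lies in the admissible range $\eta^{-1}\le T^{2-\kappa}$ because $\delta_n<2$. You also equate the constant term with $c_nT^{d}$, dropping the lower-order volume term $O(T^{d-n+\epsilon})$ from Lemma~\ref{ball estimates}. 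That omission is harmless since $n>\delta_n$, but it should be stated.
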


\begin{remark}
The pointwise upper bound of Theorem \ref{blomerlutsko1} was a significant strengthening of previous known upper bounds. The first such upper bound was obtained by Duke--Rudnick--Sarnak \cite{Dukerudnicksarnak} with the exponent

\begin{equation*}
\delta_{n}^{DRS} = \frac{1}{n+1}.
\end{equation*}
This was slightly improved by Gorodnik--Nevo--Yehoshua \cite{GNY} with a pointwise improvement of asymptotic size
\begin{equation*}
\delta_{n}^{GNY} \sim 2 \cdot \delta_n^{DRS}.
\end{equation*}
\end{remark}

\begin{remark}
Local average results analogous to Theorem \ref{blomerlutsko2} have been studied in rank one \cite{cherubinikatsivelos, katsivelos, PetridisRisager:2018a}, using estimates for the Quantum variance of Laplace eigenfunctions as well as estimates for spectral exponential sums appearing in the study of the Prime geodesic theorem. 
\end{remark}

\subsection{Our results}

In this work we improve the pointwise upper bound of Blomer and Lutsko for almost all pairs $z,w$. Our main result is the following.

\begin{theorem} \label{ourmainthm}
Let $n \geq 3$ and fix $w \in G$. Then, there exists a full measure subset $B_w \subset G$, such that for every $z \in B_w$ and for every $\epsilon>0$ the following bound 
\begin{eqnarray*}
 E_n (T;z,w)  = O_{n,z,w,\epsilon} \left(T^{n(n-1) - \min\left\{n/2,2\right\} + \epsilon}\right)
\end{eqnarray*}
holds as $T \to \infty$, uniformly for all sufficiently large $T$. The analogous assertion holds for every fixed $z \in G$. In particular,
\begin{equation*}
E_3 (T;z,w) = O \left(T^{9/2 +\epsilon}\right), \ \ \ E_4 (T;z,w)= O \left(T^{10 +\epsilon}\right)
\end{equation*}
for almost all $z,w$. 
\end{theorem}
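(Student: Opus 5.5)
We follow the rank one strategy of Phillips--Rudnick and Petridis--Risager: first bound the mean square of the smoothed error over compact sets, then pass to almost every point with a Borel--Cantelli argument along a sparse sequence of $T$.

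\emph{Spectral expansion.} Fix $w$ and a compact set $\Omega\subset G$. Let $k_{T}$ be the bi-$K$-invariant indicator function of the Frobenius ball of radius $T$, and let $k_{T,\eta}=k_T*\psi_\eta$ be its smoothing by an approximate identity supported in a ball of radius $\eta=T^{-\alpha}$. By monotonicity, $N_n(T;z,w)$ is squeezed between the automorphic kernels $K_{T\mp c\eta T}(z,w)=\sum_\gamma k_{T\mp c\eta T,\eta}(z^{-1}\gamma w)$. Moving between $T$ and $T\pm c\eta T$ changes the main term by $O(T^{n(n-1)}\eta)$.

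We expand $K_{T,\eta}(z,w)$ spectrally on $L^2(X)$, using the cusp forms $\phi_j$, the residual spectrum and Langlands' Eisenstein series. The constant eigenfunction gives $c_nT^{n(n-1)}$ plus lower order terms. Every other spectral parameter $\mu$ contributes $\hat k_{T,\eta}(\mu)\phi(z)\overline{\phi(w)}$, where $\hat k$ is the spherical transform.

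\emph{Mean square bound.} Integrate $|E^{\rm sm}(T;z,w)|^2$ over $z\in\Omega$, replacing $\Omega$ by $X$ after dominating $\mathbf 1_\Omega$ by a $\Gamma$-periodized function. Parseval then gives
\[
\int_\Omega |E^{\rm sm}(T;z,w)|^2\,d\mu(z)\ll \sum_{j}|\hat k_{T,\eta}(\mu_j)|^2|\phi_j(w)|^2+(\text{continuous and residual parts}).
\]
This removes the dependence on $\phi_j(z)$ and needs only one point $w$. For the spherical transform of the Frobenius ball I would use stationary phase bounds of Blomer--Lutsko type. Away from the trivial representation they should give
\[
|\hat k_{T,\eta}(\mu)|\ll T^{n(n-1)/2}\|\mu\|^{-d/2}(1+\eta\|\mu\|)^{-A},
\]
with $d=\dim G/K$, together with a separate bound for $\mu$ near the degenerate and residual parameters.

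Summing against the local Weyl law $\sum_{\|\mu_j\|\le \lambda}|\phi_j(w)|^2\ll\lambda^{d}$ of Müller, and the analogous bound for Eisenstein series, should make the mean square $\ll T^{2n(n-1)-2\min\{n/2,2\}+\epsilon}$ for a suitable choice of $\eta$. Here $\min\{n/2,2\}$ records which contribution dominates: the generic tempered spectrum, essentially controlled by $\log \eta$ and giving a saving of $2$, or the residual and maximally degenerate Eisenstein spectrum. The latter sits near the trivial representation, and its $\hat k$ is large of size roughly $T^{n(n-1)-n/2}$. I would identify this by computing the leading exponent $\rho-\mu$ for the largest non-trivial parameter appearing in $L^2(\Gamma\backslash G)$ for $\mathrm{SL}(n,\Z)$.

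\emph{From $L^2$ to almost everywhere.} Let $\beta=\min\{n/2,2\}$. Take $T_m=m^{1/M}$ for large $M$. By Chebyshev and the mean square bound,
\[
\mu\bigl\{z\in\Omega: |E^{\rm sm}(T_m)|>T_m^{n(n-1)-\beta+\epsilon}\bigr\}\ll T_m^{-2\epsilon},
\]
which is summable once $M\epsilon$ is large. Borel--Cantelli then shows that for almost every $z\in\Omega$ the bound holds for all large $m$. For $T_m\le T\le T_{m+1}$, monotonicity of $N_n$ controls the gap, since
\[
T_{m+1}^{n(n-1)}-T_m^{n(n-1)}\ll T^{n(n-1)}m^{-1}\approx T^{n(n-1)-M}.
\]
Exhausting $G$ by countably many compacts gives the full measure set $B_w$. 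The statement with $z$ fixed follows by symmetry, since $\|z^{-1}\gamma w\|_F=\|w^{-1}\gamma^{-1}z\|_F$.

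\emph{Main obstacle.} I expect the hardest step to be the uniform bounds for $\hat k_{T,\eta}(\mu)$. This concerns both the non-tempered residual parameters, which produce the $n/2$ threshold, and controlling the Eisenstein contributions near the walls of the Weyl chamber. For these I would try the explicit Harish-Chandra integral formula combined with the estimates already established by Blomer--Lutsko.
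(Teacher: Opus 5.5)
\textbf{The gap: passing from a fixed-$T$ mean square to almost every $z$.} With $T_m=m^{1/M}$, Chebyshev gives exceptional sets of measure $T_m^{-2\epsilon}=m^{-2\epsilon/M}$. The series $\sum_m m^{-2\epsilon/M}$ diverges as soon as $M\ge 2\epsilon$, so ``summable once $M\epsilon$ is large'' is backwards.

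A sparser sequence does not rescue this. Write $d=n(n-1)$ (not $\dim G/K$). Interpolating by monotonicity to within $T^{d-\gamma}$ needs spacing $\asymp T^{1-\gamma}$, i.e.\ about $T^{\gamma}$ radii in each block $[T,eT]$. Grant the correct fixed-$T$ mean square $T^{2(d-n/2)+\epsilon}$ for the smoothed error. Then each radius has an exceptional set of measure about $T^{2\gamma-n}$, and the union over a block has measure up to $T^{3\gamma-n}$. This route therefore gives only the saving $\gamma<\min\{n/3,2\}$. That suffices for $n\ge 6$. For $n=3$ it only reproduces the Blomer--Lutsko exponent $1$ instead of $3/2$, and for $n=4,5$ it gives $4/3$ and $5/3$ instead of $2$. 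So neither displayed bound for $E_3$, $E_4$ follows.

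The missing ingredient is a maximal inequality in the radius. The paper proves $\bigl\|\sup_{0\le t\le 1}|E_Q^{\pm}(t,\cdot)|\bigr\|_{L^2(X)}\ll Q^{b+\eta}(1+\log Q)$, with $b=d-n/2$ (Proposition~\ref{maximal estimate}). This comes from the unit-interval Sobolev inequality $\|\sup_t|F|\|_2^2\le A_0+2\sqrt{A_0A_1}$ (Lemma~\ref{sobolev inequality}), which needs a bound for the radial derivative $T\partial_T h_T(\mu)$ as well (the case $q=1$ of Proposition~\ref{new transform bound}). On a shell $U\asymp L$ one has $A_{0,L}\ll Q^{2b+2\eta}L^{-1}$ and $A_{1,L}\ll Q^{2b+2\eta}L$, so the geometric mean loses nothing. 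Borel--Cantelli is then applied only along $Q_k=e^k$, with the supremum covering every $T\in[Q_k,eQ_k]$.

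\textbf{Where $\min\{n/2,2\}$ comes from.} Your account needs correcting. The $n/2$ is indeed the spectral gap $\|\Re\mu\|_\infty\le(n-2)/2$ on the non-constant spectrum, which caps the transform at $T^{d/2+n\beta(\mu)}\le T^{d-n/2}$. But the $2$ comes neither from the tempered spectrum nor from $\log\eta$: the mean square of the smoothed error is $\ll T^{2(d-n/2)+\epsilon}$ for every admissible smoothing. The $2$ is the unsmoothing loss $\delta T^{d}$. The transform estimate (Blomer--Lutsko's Lemma~3, Proposition~\ref{new transform bound} here) is valid only for $U\le CT^{2-\kappa}$. Beyond that one has just $|h_T|\le\mathrm{vol}(B_T)$ and the mollifier decay $(1+\delta U)^{-A}$, and this forces $\delta\ge T^{-2+\kappa}$. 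Your bound for $\hat k_{T,\eta}$ carries no such range restriction, so with it nothing would limit $\eta$ and the $2$ could not appear.

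\textbf{Behaviour near the walls.} Near the Weyl chamber walls the available bound is weaker than your isotropic one when $n\ge 4$. It carries the factor $\mathcal{G}(\tau)$, which is $\asymp U^{-(n-1)/2}$ generically but can be as large as $U^{-1/2}$. The paper compensates by pairing it with the local density $\mathcal{W}(\tau)$ of Proposition~\ref{local spectral density} via $\mathcal{G}^2\mathcal{W}\ll U^{d/2-n+1}$ (Lemma~\ref{spectral cancellation}).

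\textbf{The symmetry step.} The identity $\|z^{-1}\gamma w\|_F=\|w^{-1}\gamma^{-1}z\|_F$ is false for $n\ge3$, since $\|g^{-1}\|_F\ne\|g\|_F$ in general (e.g.\ $g=\mathrm{diag}(2,2,1/4)$). Use $\|g\|_F=\|g^T\|_F$ instead, which gives $N_n(T;z,w)=N_n(T;w^{-T},z^{-T})$. Alternatively, rerun the argument in the variable $w$ as the paper does.
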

In comparison, Theorem \ref{blomerlutsko1} gives errors $E_3 (T;z,w)= O(T^{5+\epsilon})$ and $E_4 (T;z,w) = O(T^{54/5 +\epsilon})$ respectively, for every fixed pair $z,w$. For every $n \geq 4$, the power saving in the upper bound takes the form $T^{n(n-1) -2 +\epsilon}$. In \cite{blomerlutsko} it is predicted that this power saving may be essentially optimal, and Theorem \ref{blomerlutsko2} supports this expectation even for $n=3$. In the rank one case, several averaging results, higher moments and $\Omega$ results have been studied for the error term in the hyperbolic lattice counting problem, see for instance \cite{cham2, chatz, PhillipsRudnick:1994}. In the next result we prove an $\Omega$-estimate for the error term in the higher rank case, confirming the prediction from \cite{blomerlutsko}.

\begin{theorem} \label{omegaresult}
For $n \geq 3$ the error term $E_n (T;z,w)$ cannot be very small for \textit{every} fixed pair $z,w$. In particular we have
\begin{equation*}
E_n (T;I_n,I_n)=  \Omega \left(T^{n(n-1)-2}\right),
\end{equation*}
where by $I_n$ we denote the identity matrix.
\end{theorem}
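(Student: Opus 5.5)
The plan is an elementary jump argument that uses only the integrality of $\Gamma$; no spectral theory is needed. For $z=w=I_n$ we have $N_n(T;I_n,I_n)=\#\{\gamma\in\slnz:\ \|\gamma\|_F^2\le T^2\}$. Since $\gamma$ has integer entries, $\|\gamma\|_F^2=\sum_{i,j}\gamma_{ij}^2$ is a non-negative integer. Hence the function $T\mapsto N_n(T;I_n,I_n)$ is constant on every interval $\sqrt{m}\le T<\sqrt{m+1}$ with $m\in\Z_{\ge 0}$. The main term $c_nT^{n(n-1)}$, by contrast, grows strictly over each such interval. The error term must therefore move by roughly the growth of the main term across each interval.

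Concretely, write $d=n(n-1)/2$ and fix a large integer $m$. Take $T_1=\sqrt{m}$ and $T_2=\sqrt{m+1-\eta}$ with a small fixed $\eta\in(0,1)$, say $\eta=1/2$. Then $N_n(T_1;I_n,I_n)=N_n(T_2;I_n,I_n)$, so
\begin{equation*}
E_n(T_2;I_n,I_n)-E_n(T_1;I_n,I_n)=-c_n\bigl((m+1-\eta)^{d}-m^{d}\bigr).
\end{equation*}
By the mean value theorem, the right-hand side has absolute value at least $c_n(1-\eta)\,d\,m^{d-1}$. This is $\gg_n T_1^{\,n(n-1)-2}$, because $m^{d-1}=T_1^{2d-2}=T_1^{n(n-1)-2}$.

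By the triangle inequality, at least one of $|E_n(T_1;I_n,I_n)|$ and $|E_n(T_2;I_n,I_n)|$ is then $\ge \tfrac12 c_n(1-\eta)\,d\,m^{d-1}\gg T^{n(n-1)-2}$, where $T\in\{T_1,T_2\}$ and $T_1\asymp T_2$. Letting $m\to\infty$ produces a sequence $T\to\infty$ along which $|E_n(T;I_n,I_n)|\gg T^{n(n-1)-2}$, which is exactly $E_n(T;I_n,I_n)=\Omega(T^{n(n-1)-2})$. Alternatively, one can argue by contradiction: assuming $E_n=o(T^{n(n-1)-2})$ contradicts the displayed identity.

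There is no real obstacle here. The only points needing care are that the counting condition is the closed one, $\le T$, so that $N_n$ is right-continuous and genuinely constant on $[\sqrt m,\sqrt{m+1})$, and that the constant $c_n$ is positive, which is clear from its formula. The same argument applies verbatim to any $z,w$ for which $z^{-1}\Gamma w$ consists of matrices whose squared Frobenius norms lie in a fixed lattice $\lambda\Z$ with $\lambda>0$. This explains the phrasing that the error cannot be small for \emph{every} pair $z,w$.
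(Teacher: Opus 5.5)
Your proof is correct and is essentially the paper's argument: integrality of $\|\gamma\|_F^2$ makes $N_n(T;I_n,I_n)$ constant while $T^2$ stays between consecutive integers, whereas $c_nT^{n(n-1)}$ grows by $\asymp T^{n(n-1)-2}$ there, so one of the two endpoint errors must be at least half that size. The differences are only cosmetic: the paper takes $T^2=\ell+\tfrac14$ and $\ell+\tfrac34$ where you take $m$ and $m+\tfrac12$, and your $d=n(n-1)/2$ differs from the paper's $d=n(n-1)$, though you use it consistently.
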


The proof of Theorem \ref{omegaresult} is quite elementary and arithmetic in spirit. This $\Omega$-result can be compared with the known $\Omega$-results for the classical Euclidean and hyperbolic lattice counting problems in large dimensions, see for instance \cite{PhillipsRudnick:1994}.

\begin{remark}
For $n \geq 3$, the Frobenius norm of $g^{-1}$ need not equal that of $g$. Consequently, the kernel \eqref{auto kernel count} (defined in terms of \eqref{automorphickernel}) lacks symmetry in $z,w$.
The orientation in \eqref{automorphickernel} determines the conjugations in the pre-trace formula. Keeping
that orientation avoids importing a rank-one symmetry which higher-rank norm balls do not possess.   
\end{remark}

\begin{remark}
The argument of our proof works for almost all pairs of points $(z,w)$, hence it cannot guarantee the bound for any \textit{specific} preassigned pair of points. The set $B_w$ in Theorem \ref{ourmainthm} does in general depend on $w$, and as such, does not produce any explicit point $z \in B_w$, a single full-measure set valid for all $w$ or any information for the diagonal case $z=w$.
\end{remark}

\begin{remark}
Our main Theorem \ref{ourmainthm} is similar in spirit with \cite{Jana1, Jana2}, where optimal almost-everywhere approximation results for dense rational orbits are studied (see also \cite{GGN}).
\end{remark}

\subsection{Strategy of proof} We follow the standard strategy in the study of lattice counting problems, which starts with the use of the pre-trace formula (i.e. the spectral theorem for point pair invariant kernels) for smooth approximations of the counting function. We follow the strategy of Blomer and Lutsko, combined with a new maximal estimate (Proposition \ref{maximal estimate}) and the first Borel-Cantelli lemma. We refer to \cite{Sogge:1993} for a more concrete introduction to maximal estimates and their applications, and to \cite{chatzakosdimakis, dimakis} for other recent applications of analogous maximal estimates.

\subsection{AI Declaration} 
We got the idea of using maximal estimates for bounding the error term from \cite{chatzakosdimakis}. The main idea in \cite{chatzakosdimakis} was obtained through extensive interactions with ChatGPT-5.6 Sol. The authors formulated the project, evaluated and revised the model’s suggestions, independently verified all mathematical arguments, and wrote the final manuscript. The authors take full responsibility for the correctness of all statements and proofs in this paper.

\subsection{Acknowledgements} D.C. and C.K. were supported by the Hellenic Foundation for Research and Innovation (H.F.R.I.) under the “3rd Call for H.F.R.I. Research Projects to support Faculty Members \& Researchers” (Project Number: 25622). We would like to thank Yiannis Petridis and Polyxeni Spilioti for useful conversations.

\section{Background material}

\subsection{Notation} We fix the following notation which will be used throughout the paper:
\begin{equation*}
    d=n(n-1),\quad r=n-1,\quad b=d-\frac{n}{2}.
\end{equation*}
Our approach uses the pre-trace formula applied to smooth approximations of the
\begin{equation} \label{ktdefinition}
k_T = \chi_{B_T},
\end{equation}
where $B_T =\{g\in G:\|g\|_F\leq T\}$ is the $\|\cdot\|_F$-ball of radius $T$. Let us also set $M_n(T) = c_n T^d$ and recall from \eqref{sharperror} that $E_n(T;z,w) = N_n(T;z,w) - M_n(T)$. We also distinguish the contribution of the constant eigenfunction from the main term:
\begin{equation}\label{setting}
    E_0(T;z,w) =N_{n}(T;z,w)-M_0(T), 
\end{equation}
with 
\begin{equation} \label{m0}
M_0(T) = V^{-1}\hbox{vol($B_T$)}. 
\end{equation}
The replacement of $M_0$ by $M_n$ will be justified by a residue calculation, including a differentiated remainder estimate.

\subsection{Spectral input and the spherical transform}

The material covered in this section is standard; for more details we refer to \cite{blomerlutsko, goldfeld, helgason, mw1, mw2} and references therein.

Let $\mathfrak{a}$ be the Lie algebra of $A$. The Iwasawa projections $A,H:G\rightarrow\mathfrak{a}$ are characterized by 
\begin{equation*}
    g\in K\exp(H(g))N=N\exp(A(g))K.
\end{equation*}
Identify
\begin{equation*}
    \mathfrak{a}^\ast=\{\t\in\R^n:\sum_{i=1}^{n}\t_i=0\},\quad \quad \mathfrak{a}_{\C}^{\ast}=\{\m\in\C^n:\sum_{i=1}^{n}\m_i=0\}.
\end{equation*}
For each spectral parameter $\m\in\mathfrak{a}_{\C}^{\ast},$ we have that 
\begin{equation}\label{spectral parameter conditions}
    \{\m_j\}=\{-\overline{\m_j}\},\quad \mu\in C_\r+i\mathfrak{a}^\ast,
\end{equation}
with $C_\r$ being the convex hull of the Weyl orbit of $\rho$ and $\rho=(\frac{n-1}{2},\frac{n-3}{2},...,\frac{3-n}{2},\frac{1-n}{2})$. We will use the notation 
\begin{eqnarray}\label{spectral parameters}
    \mu &=& m+i\t,\quad m\in C_\r,\,\t\in\mathfrak{a}^{\ast}, \nonumber\\
    \b &:=& \b(\m) =\|\Re\m\|_\infty, \\
    U &:=& U(\m)=1+\|\t\|_\infty. \nonumber
\end{eqnarray}
Since $C_\rho$ is the convex hull of a finite set it is compact. We deduce that $\|\Re\m\|_\infty$ is bounded and also
\begin{equation*}
    U(\m)\asymp_n 1+\|\m\|_\infty.
\end{equation*}

For $\mu\in\mathfrak{a}_\C^\ast$, following the convention of \cite{blomerlutsko}, the elementary spherical function is  defined by 
\begin{equation*}
    \phi_{\mu}(g)=\int_{K}\exp \left((\m-\r) \, H(gk) \right)\,dk=\int_{K}\exp \left((\m+\r) \, A(kg)\right)\,dk.
\end{equation*}
This is bi-$K$-invariant and depends only on the Weyl orbit of $\m$. The normalization $\hbox{vol}(K)=1$ yields for all $g\in G$:
\begin{eqnarray*}%\label{sphericalproperties}
  \phi_\r (g) &=& 1,  \nonumber  \\
  |\phi_\m(g)| &\leq& 1 \ \ \text{for all} \  \m\in\mathfrak{a}_\C^\ast.
  \end{eqnarray*}
  
For a compactly supported, integrable, bi-$K$-invariant function $f:G\rightarrow\C$ define its spherical transform by
\begin{equation}\label{spherical 1 step}
    h_{f}(\mu)=\int_{G}f(g)\phi_{-\m}(g)\,dg.
\end{equation}
Alternatively this can be obtained in two steps, first using the Abel transform
\begin{equation*}
    (\mathcal{A}_f)(a)=\int_{N} e^{\r \log a}f(an)\,dn
\end{equation*}
and then by composing with the Fourier transform 
\begin{equation*}
    h_{f}(\m)=\int_{A}(\mathcal{A}_f)(a)e^{-\m\log a}\,da.
\end{equation*}

For two  integrable, compactly supported, bi-$K$-invariant functions $f_1,f_2$ we use the standard convolution convention
\begin{equation*}
    (f_1\ast f_2)(x)=\int_{G}f_1(xg^{-1})f_2(g)\,dg.
\end{equation*}
The spherical transform is multiplicative, in the sense 
\begin{equation}\label{convolution spherical}
    h_{f_1\ast f_2}=h_{f_1}\cdot h_{f_2}.
\end{equation}
For more details on the spherical transform, we refer to \cite{helgason}.

\subsection{The spectral measure and Plancherel synthesis}

Let $(\Omega,d\omega)$ denote a normalized parameter space  with multiplicities and let $u_\omega$ denote the corresponding generalized eigenfunctions. Integration over $\Omega$ includes a sum over an orthonormal basis of the discrete spectrum, together with the appropriate Eisenstein integrals. In particular $d\omega$ is a positive measure. We write $\omega=0$ for the constant constituent and normalize it such that 
\begin{equation*}
    \m_0=\rho,\quad u_0=\frac{1}{V^{1/2}}.
\end{equation*}
Removing this constituent we denote the remaining parameter space by $\Omega'.$

Take $P:L^2(X)\rightarrow\mathcal{C}$ to be the Plancherel analysis map, where $\mathcal{C}$ denotes the coefficient Hilbert space. Then the adjoint $S=P^\ast$, i.e. the synthesis map, is a contraction and for 
\begin{equation*}
Sc=\int_{\Omega}c(\omega)u_\omega\,d\omega
\end{equation*}
the contraction property reads 
\begin{equation}\label{contraction}
    \|Sc\|^2_{L^2(X)}=\left\|\int_{\Omega}c(\omega)u_\omega\,d\omega\right\|_2^2\leq\int_{\Omega}|c(\omega)|^2\,d\omega.
\end{equation}
For our kernel convention we will also need the conjugate synthesis
\begin{equation*} %\label{conjugate synthesis}
    \overline{S}c:=\overline{S}(\overline{c})
\end{equation*}
Despite its notation, the two conjugations cancel and $\overline{S}$ is indeed a linear operator. Complex conjugation preserves Hilbert space norms and therefore the analogous inequality \eqref{contraction} holds also for the conjugated synthesis (spatial orthogonality inequality). In particular no individual Eisenstein series is assigned a global $L^2$ norm (see also Remark \ref{remark34}). We refer to \cite{mw2} for more details on the decomposition of the $L^2$-space $L^2(X)$.

\subsection{The automorphic kernel}

For $z,w\in G$ the automorphic kernel with respect to $f$ and $\Gamma$ is defined by
\begin{equation} \label{automorphickernel}
    K_{f}(z,w)=\sum_{\gamma\in\Gamma}f(z^{-1}\gamma w).
\end{equation}
The left $\Gamma$-invariance follows by re-indexing the sum and the right $K$-invariance follows from bi-$K$-invariance of $f$. Therefore this is a well defined function on the quotient $\Gamma\backslash G / K$. Plugging $f=k_{T}$ (defined by \eqref{ktdefinition}) in the automorphization \eqref{automorphickernel}, bi-$K$-invariance of the kernel $K_{k_T}$ follows from the invariance of the Frobenius norm. We simplify the notation $K_{T}(z,w):=K_{k_T}(z,w)$ and the sharp counting gives exactly 
\begin{equation}\label{auto kernel count}
    N_{n}(T;z,w)=K_{T}(z,w).
\end{equation}
To analyze the counting function spectrally we use the pre-trace formula.

\begin{proposition}[Pre-trace formula] \label{pretrace formula}

Let $f\in C_{c}^{\infty}(G)$ be a bi-$K$-invariant function. Then
\begin{equation}\label{kernel spectral}
K_{f}(z,w)=\int_{\Omega}h_f(\m_\omega)\overline{u_\omega (z)}u_\omega (w)\,d\omega.
\end{equation}
The contribution of the constant eigenfunction corresponding to $\m_0=\rho, \, u_0=V^{-1/2}$ is
\begin{equation*}
    \frac{1}{V}\int_{G}f(g)\,dg.
\end{equation*}
    For fixed $w$, the left hand side is a compactly supported, smooth function of $z$ on the quotient orbifold and in particular belongs in $L^2(X)$.
\end{proposition}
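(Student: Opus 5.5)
The plan is to obtain the pre-trace formula from the spectral decomposition of $L^2(X)$ (Langlands, see \cite{mw2}), applied to the function $z\mapsto K_f(z,w)$ for fixed $w$. First I check the regularity and support claims. The function $f$ is compactly supported, and $\Gamma$ is discrete in $G$. Hence for $z$ and $w$ in compact sets only finitely many $\gamma$ satisfy $z^{-1}\gamma w\in \operatorname{supp} f$. The sum \eqref{automorphickernel} is therefore locally finite, and $K_f(\cdot,w)$ is smooth. It is left $\Gamma$-invariant and right $K$-invariant, as already noted in the text.

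Next I show that $K_f(\cdot,w)$ is compactly supported modulo $\Gamma$. If $K_f(z,w)\neq 0$, then $z\in \gamma w\,(\operatorname{supp} f)^{-1}$ for some $\gamma$. The projection of the compact set $w(\operatorname{supp} f)^{-1}$ to $\Gamma\backslash G/K$ is compact. So $K_f(\cdot,w)\in C_c^\infty(X)\subset L^2(X)$, which proves the last assertion of the proposition.

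For the expansion, I write $K_f(\cdot,w)=\int_\Omega c(\omega)u_\omega\,d\omega$, with coefficients given by the analysis map $P$. Since the function is in $C_c^\infty$, its pairing with each generalized eigenfunction, including Eisenstein series, converges absolutely. Unfolding the sum over $\Gamma$ gives
\[
\langle K_f(\cdot,w),u_\omega\rangle=\int_{G/K} f(z^{-1}w)\,\overline{u_\omega(z)}\,dz .
\]
Substituting $z=wg^{-1}$ turns this into the action of the convolution operator attached to $f$ on $\overline{u_\omega}$, evaluated at $w$. By the Selberg principle, any bi-$K$-invariant convolution operator acts on a joint eigenfunction of the invariant differential operators with spectral parameter $\mu_\omega$ by the scalar $h_f(\mu_\omega)$; this holds for Eisenstein series as well. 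One then checks the sign convention in \eqref{spherical 1 step}. The orientation $f(z^{-1}\gamma w)$ together with the fact that $f$ need not be symmetric produces either $\phi_{-\mu}$ or $\phi_{\mu}$, and these agree up to the relation $\{\mu_j\}=\{-\overline{\mu_j}\}$ in \eqref{spectral parameter conditions}. This yields $c(\omega)=h_f(\mu_\omega)\overline{u_\omega(z)}$ in the appropriately conjugated form, matching \eqref{kernel spectral}.

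The constant term follows directly. With $u_0=V^{-1/2}$ and $\phi_\rho\equiv1$, we get $h_f(\rho)=\int_G f$, and the contribution is $V^{-1}\int_G f(g)\,dg$.

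The main obstacle is passing from $L^2$ convergence to pointwise identity at every $(z,w)$. My first approach is to write $f=f_1\ast f_2$, or to apply a high power of the Casimir, so that $h_f$ decays rapidly; this is possible since $h_f$ is of Paley--Wiener type because $f\in C_c^\infty$. Combined with polynomial bounds for the local Weyl law and for $\int |u_\omega(z)|^2 d\omega$ over spectral windows, this gives absolute and locally uniform convergence of the spectral integral. Both sides are then continuous and agree in $L^2$, hence agree pointwise.
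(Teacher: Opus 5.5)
\textbf{Overall.} Your outline is the standard derivation, and it supplies more than the paper does. The paper gives no proof of Proposition~\ref{pretrace formula}: it quotes the result from the spectral decomposition of $L^2(X)$. It adds only the geometric compact-support argument, which you reproduce correctly, and, in Remark~\ref{remark34}, the $L^2$-convergence of the spectral side via \eqref{coarse shell bound}. The identity is later used only in $L^2(X)$. Your unfolding and your upgrade to a pointwise identity are fine: rapid decay of $h_f$ via Casimir powers, local Weyl bounds uniform for the point in compact sets, and continuity of both sides.

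\textbf{The gap: orientation bookkeeping.} One step is wrong as written, and it is exactly what the paper's remark on $\|g^{-1}\|_F\neq\|g\|_F$ warns about. The functions $\phi_\mu$ and $\phi_{-\mu}$ do not agree. For unitary $\mu$ one has $-\mu\sim\overline{\mu}$, so $\phi_{-\mu}=\overline{\phi_\mu}$. Moreover $h_f(-\mu)=\int_G f\,\phi_\mu=h_{f^\vee}(\mu)$ with $f^\vee(g)=f(g^{-1})$, and this differs from $h_f(\mu)$ in general, e.g.\ for the Frobenius-ball kernels, which are not inversion invariant.

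In your computation the Selberg principle is applied to $\overline{u_\omega}$, whose parameter is $\overline{\mu_\omega}\sim-\mu_\omega$, not $\mu_\omega$. Substituting $z=wg$ and using $\phi_\nu(g^{-1})=\phi_{-\nu}(g)$, one actually gets
\[
\langle K_f(\cdot,w),u_\omega\rangle=\int_G f(g^{-1})\,\overline{u_\omega}(wg)\,dg=h_f(-\mu_\omega)\,\overline{u_\omega(w)},
\]
hence $K_f(z,w)=\int_\Omega h_f(-\mu_\omega)\,\overline{u_\omega(w)}\,u_\omega(z)\,d\omega$. Your displayed coefficient $h_f(\mu_\omega)\overline{u_\omega(z)}$ cannot be right in this expansion, since $z$ is the synthesis variable.

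\textbf{How to repair it.} To reach \eqref{kernel spectral} from your expansion, relabel the spectral family by $u_\omega\mapsto\overline{u_\omega}$, $\mu_\omega\mapsto\overline{\mu_\omega}$. This is legitimate because orthonormal bases of eigenspaces and the Eisenstein part of the decomposition are stable under complex conjugation.

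The cleaner fix is to fix $z$ and expand $K_f(z,\cdot)$ in $w$. Unfolding gives $\int_G f(g)\,\overline{u_\omega}(zg)\,dg$. The mean-value property $\int_K\phi(zkg)\,dk=\phi(z)\phi_\nu(g)$, applied with $\phi=\overline{u_\omega}$ and $\nu=\overline{\mu_\omega}\sim-\mu_\omega$, then yields $h_f(\mu_\omega)\overline{u_\omega(z)}$ directly. This is \eqref{kernel spectral} as a regular synthesis in $w$, matching the paper's remark on regular versus conjugated synthesis.

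The same care is needed for the constant term: $h_f(\rho)=\int_G f\,\phi_{-\rho}$, so you need $\phi_{-\rho}=\phi_\rho=1$. This holds because $-\rho$ is Weyl conjugate to $\rho$.
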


\begin{remark}
With our convention of the automorphic kernel, with the spectral expansion \eqref{kernel spectral} we will use either regular synthesis
    \begin{equation}\label{regular synthesis operator}
        Sc=\int_{\Omega}c(\omega)u_\omega \,d\omega,
    \end{equation}
    with
    \begin{equation*}
        c(\omega)=h_f(\m_\omega)\overline{u_\omega },
    \end{equation*}
    or conjugated synthesis
    \begin{equation}\label{conjugated synthesis operator}
        \overline{S}c=\int_{\Omega}c(\omega)\overline{u_\omega} \,d\omega, 
    \end{equation}
    with 
    \begin{equation*}
         c(\omega)=h_f(\m_\omega)u_\omega ,
    \end{equation*}
    depending on whether we fix $w$ and prove our assertion for almost all $z$ (in this case we use conjugated synthesis) or vice versa.
\end{remark}

\section{Bounds on the spectral parameters}

In this section we record two facts about the automorphic spectrum that will be used in the following sections. The first concerns a bound for the real part of every nonconstant spectral parameter, including parameters occurring in the continuous spectrum. The second is a local upper bound for spectral mass (local Weyl's law in shells).

\subsection{Non constant components}
The fact $\Re\m\in C_{\rho}$ alone gives $\b\leq (n-1)/2$, with equality at the constant $\m=\rho$. After removing the constant contribution we deduce the following stronger bound, needed for our counting argument. This is actually already known, see \cite[Theorem~1.1]{oh} and \cite[Theorem~1.5]{cou}, but we give a simpler and more concise proof here. 

    \begin{proposition}[Spectral gap]   \label{non constant bound}
    For $n\geq 3$, every non constant component of the spherical automorphic spectrum of $\slnz$ satisfies
    \begin{equation*}
       \b(\m_\omega) :=   \| \Re(\m_\omega)\|_{\infty} \leq \frac{n-2}{2}.
    \end{equation*}
This includes the cuspidal and residual discrete spectra and the induced continuous spectrum. 
\end{proposition}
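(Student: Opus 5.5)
We split the non-constant spectrum according to Langlands' spectral decomposition for $\slnz$. Every spherical constituent of $L^2(X)$ other than the constant comes from a cuspidal datum: a standard parabolic $P$ with Levi $M=\prod_{j}\mathrm{GL}(n_j)$, $\sum n_j=n$, a cuspidal representation $\pi_j$ on each block, a Speh-type (residual) construction of size $m_j$ on $\pi_j$ with $n_j=m_j d_j$, $\pi_j$ cuspidal on $\mathrm{GL}(d_j)$, and unitary induction. The archimedean spherical parameter is then the concatenation of the segments
\[
\mu_{\pi_j}+\Bigl(\tfrac{m_j-1}{2},\tfrac{m_j-3}{2},\dots,\tfrac{1-m_j}{2}\Bigr)+i t_j ,
\]
where $\mu_{\pi_j}$ is the parameter of $\pi_j$ and $t_j\in\R$ are the continuous parameters (here $\mu_{\pi_j}+c$ means shifting every coordinate by $c$). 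Thus $\Re\m_\omega$ has entries of the form $\Re\mu_{\pi_j,k}+\tfrac{m_j+1-2l}{2}$. The plan is to bound each such entry by $\frac{n-2}{2}$.

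\emph{Step 1 (trivial cuspidal datum, $d_j=1$).} On $\mathrm{GL}(1)$ the spherical cuspidal data are unitary characters, so $\Re\mu_{\pi_j}=0$ and the block contributes real parts at most $\frac{m_j-1}{2}$. If $m_j=n$ for a single block, the residual representation is the constant function, which is excluded. Hence $m_j\le n-1$, giving $\frac{m_j-1}{2}\le \frac{n-2}{2}$.

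\emph{Step 2 ($d_j\ge 2$).} Here $m_j\le n/2$. For the cuspidal part we use a trivial bound toward Ramanujan on $\mathrm{GL}(d)$, namely $\|\Re\mu_\pi\|_\infty\le \frac{d-1}{2}$ minus something, or more crudely the Jacquet--Shalika bound $\|\Re\mu_\pi\|_\infty<\frac12$. Then every entry satisfies
\[
|\Re\m_{\omega,k}|<\tfrac12+\tfrac{m_j-1}{2}=\tfrac{m_j}{2}\le \tfrac{n}{4}\le \tfrac{n-2}{2}
\]
for $n\ge 4$. For $n=3$ the only case with $d_j\ge2$ is $d=3$ cuspidal ($m=1$) or $d=2$ inside a $(2,1)$ parabolic ($m=1$). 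In both cases the bound $\frac12=\frac{n-2}{2}$ holds by Jacquet--Shalika. If needed, one could instead invoke only the elementary fact that spherical unitary representations of $\mathrm{GL}(d,\R)$ whose parameters are generic satisfy $\|\Re\mu\|_\infty<\frac12$, which follows from unitarity of the generic (Whittaker) constituent.

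\emph{Step 3 (continuous spectrum).} The integration parameters $t_j$ are purely imaginary shifts and do not change real parts, so the bound transfers from the discrete data to the whole of $\Omega'$.

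The main obstacle is justifying the complete description of the residual spectrum, namely M\oe glin--Waldspurger's classification \cite{mw1, mw2}, together with the identification that the only datum attaining $\frac{n-1}{2}$ is the constant ($m=n$, $d=1$). The boundary case $n=3$ in Step 2 is also tight and relies on the strict Jacquet--Shalika inequality. We would cite \cite{mw2} for the decomposition and handle the remaining bounds as above.
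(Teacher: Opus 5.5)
Your proposal is correct and follows essentially the same route as the paper. Both arguments use the M\oe glin--Waldspurger parametrization into Speh-type segments over cuspidal data, in the same three steps:
\begin{enumerate}
\item[(i)] blocks with trivial ($\mathrm{GL}(1)$) cuspidal datum contribute at most $(m_j-1)/2\le (n-2)/2$ once the constant is excluded;
\item[(ii)] blocks with $d_j\ge 2$ are handled by Jacquet--Shalika, giving at most $m_j/2\le n/4\le (n-2)/2$, with the same separate check of the cases $d=2,3$ when $n=3$;
\item[(iii)] imaginary continuous parameters do not change real parts.
\end{enumerate}
One small correction: the $n=3$ boundary case only needs the non-strict bound $\|\Re\mu_\pi\|_\infty\le \frac12$, as the paper uses, because the target inequality is itself non-strict, so strictness of Jacquet--Shalika is not actually required.
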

\begin{proof}
For the parametrization of the spectrum and the deep input of the classification of the residual spectrum of general linear groups we follow \cite{blomerlutsko}; see also \cite{goldfeld, mw1, mw2}. We recall the parameters explicitly for completeness.
\par
Choose a partition of $n$ into $q$ blocks 
\begin{equation*}
    n=d_1+d_2+...+d_q,
\end{equation*}
where for each $j$ we choose a divisor $f_j \mid d_j$. For $f_j\geq 2$, choose a cuspidal automorphic datum on $GL_{f_j}$, whose archimidean parameter is denoted by $\nu_j=(\nu_{j,1},...,\nu_{j,f_j})$. The classical Jacquet-Shalika bounds stated in \cite[eq.~13]{blomerlutsko}, give
\begin{equation}\label{Jacquet Shalika}
  \|\Re\nu_j\|_{\infty}\leq \frac{1}{2}.
\end{equation}
For $f_j=1$, put $\nu_j=\textbf{0}$. Finally choose $s_j\in i\R$ satisfying 
$\sum_{j}d_js_j=0$ and put $\ell_j=d_j/f_j$. The entries contributing by the $j$-th block to the full parameter are equal to
\begin{equation}\label{entries}
    \nu_{j,a}+s_j+\frac{\ell_j-1}{2}-c,\quad 1\leq a\leq f_j,\,\,0\leq c\leq \ell_j-1.
\end{equation} 
In total, these blocks have $\sum_{j}f_j \, \ell_j=n$ entries. The cases $q=1$ include cuspidal representations and discrete Speh representations; varying the imaginary parameters for $q>1$ gives the induced continuous components. The formula \eqref{entries}, rather than a temperedness assumption, is what controls the possible large real parts.
\par
Since $s_j$ is imaginary, the maximum absolute real part in the $j$-th block is at most 
\begin{equation}\label{real part bound}
    \frac{1}{2}\left(\frac{d_j}{f_j}-1\right)+\frac{1}{2}\textbf{1}_{f_j\geq 2}.
\end{equation}
Indeed the shifts in \eqref{entries} range between $-(\ell_j-1)/2$ and $(\ell_j-1)/2$ and then \eqref{Jacquet Shalika} provides the extra factor $1/2$ when $f_j\geq 2$. This recovers exactly \cite[eq.~13]{blomerlutsko}.
\par 
Now if $f_j=1$, expression 
\eqref{real part bound} equals $(d_j-1)/2$. The case $d_j=n$ forces $q=1$; the inducing parameter is then zero and the block consists of the entries of $\rho$. This is precisely the constant representation in the parametrization under consideration. Once we remove this, we get $d_j\leq n-1$ and then 
\begin{equation*}
    \frac{d_j-1}{2}\leq\frac{n-2}{2}.
\end{equation*}
Moving on, if $f_j\geq 2$, then expression \eqref{real part bound} simplifies to $d_j/(2f_j)$, which is at most $n/4$.
\par
For $n=3$, the divisibility condition $f_j \mid d_j$ leaves only the choices $(d_j,f_j)=(2,2)$ or $(3,3)$ when $f_j\geq 2$. In either case $d_j/(2f_j)=1/2=(n-2)/2$. Finally, for $n\geq 4$ it is straightforward 
\begin{equation*}
    \frac{n}{4}\leq \frac{n-2}{2}
\end{equation*}
and the required bound follows. Therefore, every block satisfies the desired bound and taking the maximum over all the blocks proves the assertion over $\Omega'$.
\end{proof}

\begin{remark}
Our proof of Proposition \ref{non constant bound} uses only the Mœglin–Waldspurger classification of the spectrum, and does not depend on temperdness assumptions. It also follows as a corollary of Oh’s \cite{oh} explicit estimate, as mentioned above.
%not merely of Kazhdan's property \((T)\).
\end{remark}

\subsection{Local spectral mass}

Define the root-product weight 
\begin{equation*}
    \mathcal{W}(\t)=\prod_{1\leq i<j\leq n}(1+|\t_i-\t_j|),\quad \t\in\mathfrak{a}^\ast.
\end{equation*}
In this notation \cite[Lemma 1]{blomerlutsko} reads:
\begin{proposition}[Local spectral upper bound]\label{local spectral density}
    Let $\m\in i\mathfrak{a}^{\ast}$ , $B(\m)\subset \mathfrak{a}_{\C}^{\ast}$ a ball of size $O(1)$ about $\m$ and $w\in G$. Then 
\begin{equation*}
    \int_{B(\mu)}|u_\omega (w)|^2\,d\omega\ll_{w} \mathcal{W(\m)}.
\end{equation*}
\end{proposition}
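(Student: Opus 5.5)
Proposition \ref{local spectral density} is \cite[Lemma 1]{blomerlutsko}, so the proof will be a reduction to the local Weyl law of M\"uller \cite{muller}, applied to a suitable test kernel, rather than a new argument. We fix $w$ and choose a bi-$K$-invariant $f\in C_c^\infty(G)$, supported in a small neighbourhood of $K$ and of the form $f=f_0\ast f_0^\ast$ with $f_0^\ast(g)=\overline{f_0(g^{-1})}$. Then by \eqref{convolution spherical} we have $h_f=|h_{f_0}|^2\geq 0$ on the unitary (tempered and complementary) spectrum.

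The first key step is to shift $f$ by the spectral parameter. Following the standard Paley--Wiener construction (Helgason \cite{helgason}), we take $h_{f_0}$ to be an approximation to a bump concentrated near the Weyl orbit of $\mu$. Concretely, we use a Paley--Wiener function $\widehat{\psi}$ on $\mathfrak{a}_\C^\ast$ with $|\widehat{\psi}|\geq 1$ on $B(\mu)$, and we symmetrize it over the Weyl group. We keep $\operatorname{supp} f$ of size $O(1)$, so that the $O(1)$-scale localization in the spectral parameter is compatible with the uncertainty principle. This gives
\begin{equation*}
\int_{B(\mu)}|u_\omega(w)|^2\,d\omega \leq \int_\Omega h_f(\mu_\omega)|u_\omega(w)|^2\,d\omega = K_f(w,w)
\end{equation*}
by Proposition \ref{pretrace formula} with $z=w$, where positivity of $h_f$ on the whole spectrum follows from the factorization $f=f_0\ast f_0^\ast$.

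The second key step bounds the geometric side. We have $K_f(w,w)=\sum_\gamma f(w^{-1}\gamma w)$, and only $O_w(1)$ many $\gamma$ contribute because $f$ has compact support and $\Gamma$ is discrete. Hence $K_f(w,w)\ll_w \|f\|_\infty$. It therefore remains to show that $\|f\|_\infty\ll \mathcal{W}(\mu)$. By spherical inversion, $f(g)=\int_{i\mathfrak{a}^\ast}h_f(\lambda)\phi_\lambda(g)|c(\lambda)|^{-2}\,d\lambda$, so together with $|\phi_\lambda|\leq1$ we get $\|f\|_\infty\ll \int |h_f(\lambda)|\,|c(\lambda)|^{-2}\,d\lambda$. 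Since $h_f$ decays rapidly away from $B(\mu)$, and the Harish-Chandra Plancherel density satisfies $|c(\lambda)|^{-2}\asymp\prod_{i<j}(1+|\lambda_i-\lambda_j|)$ for $\mathrm{SL}(n)$ (Gindikin--Karpelevich), this integral is $\ll\mathcal{W}(\mu)$.

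I expect the main obstacle to be controlling the tails of $h_f$ against the polynomial growth of $|c|^{-2}$ while keeping the positivity on the non-tempered spectrum. The non-tempered parameters lie off $i\mathfrak{a}^\ast$, but they remain within a bounded real strip by Proposition \ref{non constant bound}. For these, $h_{f_0}(\mu_\omega)$ and $h_{f_0}(-\overline{\mu_\omega})$ agree by \eqref{spectral parameter conditions} and Weyl invariance, so $h_f(\mu_\omega)=|h_{f_0}(\mu_\omega)|^2\geq0$ still holds there. Alternatively, I would simply cite M\"uller's uniform local Weyl law, $\int_{\|\mu_\omega-\mu\|\le 1}|u_\omega(w)|^2\,d\omega\ll_w |c(\mu)|^{-2}$, which yields the claim directly.
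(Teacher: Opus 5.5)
Your proposal is correct in substance and follows the same route as the paper. The paper gives no argument for this statement; it quotes \cite[Lemma~1]{blomerlutsko}, exactly as your first sentence does. The sketch you add is the standard pre-trace argument behind that lemma: a positive-type kernel $f=f_0\ast f_0^\ast$, then a geometric side $\ll_w\|f\|_\infty$ because the support of $f$ is fixed independently of $\mu$, then spherical inversion against the Plancherel density, using Weyl invariance of $|c|^{-2}$ to handle the whole orbit of $\mu$.

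Two steps need tightening.

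\textbf{Weyl symmetrization.} Symmetrizing an \emph{arbitrary} Paley--Wiener function with $|\widehat\psi|\geq 1$ on $B(\mu)$ does not give $|h_{f_0}|\gg 1$ on $B(\mu)$ when $\mu$ is within $O(1)$ of a wall. Other Weyl translates of comparable size can cancel the main term. Already in rank one, take $\widehat\psi(\lambda)\approx e^{\theta(\lambda-\mu)}$, coming from a bump centred at $\theta\neq 0$. At $\lambda=\mu=it_0$ it symmetrizes to $\approx 1+e^{-2i\theta t_0}$, which vanishes when $2\theta t_0=\pi$.

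The fix is to take $h_{f_0}(\lambda)=\sum_{w\in W}\widehat\phi(w\lambda-\mu)$, where $\phi=\phi_1\ast\phi_1^\ast$ on $\mathfrak{a}$, $\phi_1^\ast(H)=\phi_1(-H)$, and $\phi_1\geq 0$ has mass one and support in a ball of small radius $\varepsilon$. Then:
\begin{itemize}
\item $\widehat\phi\geq 0$ on $i\mathfrak{a}^\ast$.
\item Since $e^{\langle x,H\rangle}=1+O(\varepsilon)$ on the support, every term with $\lambda\in B(\mu)$ has real part $\geq -O(\varepsilon)$.
\item The $w=1$ term has real part $\geq 1/2$ on $B(\mu)$.
\end{itemize}
Hence $\Re h_{f_0}\geq 1/4$ there. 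The support of $f_0$ stays inside $K\exp(\bar B_{2\varepsilon})K$, uniformly in $\mu$, as your geometric-side step needs.

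\textbf{The Plancherel density.} $|c(\lambda)|^{-2}$ satisfies only $|c(\lambda)|^{-2}\ll\prod_{i<j}(1+|\lambda_i-\lambda_j|)$, not $\asymp$, because it vanishes on the walls. The upper bound is all you use in the inversion step, so that step is fine. For the same reason, however, your fallback citation with right-hand side $|c(\mu)|^{-2}$ is false for $\mu$ on or near a wall. It must be stated with $\mathcal{W}(\mu)$, or equivalently $\int_{B(\mu)}|c(\lambda)|^{-2}\,d\lambda$.
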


For our proof we will need such spectral bounds over dyadic intervals. With $L\geq 1$  we define the shells
\begin{equation}\label{dyadic shells}
    \Omega_{L}=\{\omega\in \Omega:L\leq U(\m_\omega)\leq 2L\} \quad \text{and} \quad \Omega_{L}'=\{\omega\in \Omega':L\leq U(\m_\omega)\leq 2L\},
\end{equation}
where the latter consists of constituents with the constant removed and $U(\cdot)$ as in \eqref{spectral parameters}.

\begin{corollary}[A coarse shell bound]\label{blomer local weyl}
    Let $L\geq 1$ and $w\in G$ fixed. Then
\begin{equation}\label{coarse shell bound}
\int_{\Omega_L}|u_\omega(w)|^2\,d\omega\ll_{w,n}L^{d/2+r} .
\end{equation}
\end{corollary}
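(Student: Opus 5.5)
We cover the shell $\Omega_L$ by balls of radius $O(1)$ centred at points of $i\mathfrak{a}^\ast$, and apply Proposition \ref{local spectral density} to each ball.

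\emph{Geometry of the shell.} By Proposition \ref{non constant bound}, and more crudely by $\Re\mu\in C_\rho$, every spectral parameter $\mu_\omega=m+i\tau$ has real part $m$ in the compact set $C_\rho$. So $\mu_\omega$ lies within distance $O_n(1)$ of the purely imaginary point $i\tau$.

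The condition $L\le U(\mu_\omega)\le 2L$ says that $\|\tau\|_\infty\le 2L$. Hence $\tau$ lies in a cube of side $O(L)$ in $\mathfrak{a}^\ast$, which has dimension $r=n-1$.

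\emph{Covering.} We choose a lattice of points $\mu^{(k)}\in i\mathfrak{a}^\ast$ with spacing $1$ inside this cube. There are $\ll_n L^{r}$ such points. The balls $B(\mu^{(k)})$, of radius $C_n$ large enough to absorb the real parts, then cover all $\mu_\omega$ with $\omega\in\Omega_L$.

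\emph{Bounding each ball.} For each $k$ we have $|\tau^{(k)}_i-\tau^{(k)}_j|\le 4L$. Since there are $\binom n2=d/2$ roots,
\begin{equation*}
\mathcal{W}(\mu^{(k)})\ll_n L^{d/2}.
\end{equation*}
Proposition \ref{local spectral density} then gives
\begin{equation*}
\int_{B(\mu^{(k)})}|u_\omega(w)|^2\,d\omega\ll_w L^{d/2}.
\end{equation*}
Here we read $\mathcal{W}(\mu)$ as $\mathcal{W}(\tau)$ for $\mu=i\tau$. Because $d\omega$ is a positive measure, we may sum over the covering even though the balls overlap:
\begin{equation*}
\int_{\Omega_L}|u_\omega(w)|^2\,d\omega\le\sum_k\int_{B(\mu^{(k)})}|u_\omega(w)|^2\,d\omega\ll_{w,n}L^{r}\cdot L^{d/2}.
\end{equation*}
This is \eqref{coarse shell bound}.

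The only delicate point is that Proposition \ref{local spectral density} is stated for balls of size $O(1)$ centred at imaginary parameters. We must therefore check that the non-tempered parameters, including residual spectrum with real parts up to $(n-2)/2$, are captured by such balls. This holds because the real parts lie in the compact set $C_\rho$, so enlarging the radius by a constant depending only on $n$ suffices. The implied constant in Proposition \ref{local spectral density} is uniform in $\mu$ for a fixed ball size.
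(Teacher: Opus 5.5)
Your proposal is correct and follows essentially the same argument as the paper. Both cover the shell by $O_n(L^r)$ balls of bounded radius about purely imaginary centres, enlarging the radius to absorb real parts in the compact set $C_\rho$, then bound $\mathcal{W}\ll_n L^{d/2}$ on each ball and sum the local estimate using positivity of $d\omega$.
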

\begin{proof}
    The imaginary parameters lie in the $r$-dimensional trace-zero hyperplane. Its intersection with a box of side $O(L)$ is covered by $O_n(L^r)$ boxes of fixed side length. Choose an imaginary parameter $\t$ as center in each of these boxes. The real parts remain in the fixed compact set $C_\rho$, so enlarging the radius by a fixed amount covers the corresponding complex parameters by the balls in Proposition \ref{local spectral density}. On each of these balls $\mathcal{W}(\tau)\ll_n L^{d/2}$. Summing the local estimate over all $O_n(L^r)$ balls gives the desired bound. 
\end{proof}

\begin{remark} \label{remark34}
    The same reasoning justifies the use of a rapidly decaying spectral multiplier at a fixed point. If $a(\m)\ll_{A}(1+\|\m\|)^{-A}$, for every $A$ then 
    \begin{equation*}
        \int_{\Omega}|a(\m_\omega)|^2|u_\omega(w)|^2\,d\omega<\infty,
    \end{equation*}
    by summing \eqref{coarse shell bound} over dyadic $L$. Accordingly the synthesis against $\overline{u_{\omega}(z)}$ 
    is a well defined $L^2(X)$ function. The smooth kernels used in the proof below have such multipliers after the smoothing factor is inserted. This provides an explicit spectral justification alongside the geometric compact-support argument of Proposition \ref{pretrace formula}. 
\end{remark}

\section{The spherical transform of a Frobenius ball and smoothing}

We have seen in \eqref{auto kernel count} that the automorphic kernel corresponding to the characteristic function of the Frobenius ball equals exactly the counting function we are interested in. Due to smoothness reasons the characteristic function is not admissible in the pre-trace formula (Proposition \ref{pretrace formula}). As it is standard in these problems, before applying the pre-trace formula we convolute our original function with a smooth family of mollifiers, such that the convoluted functions approximate our original counting function as the mollifying parameter goes to $0$. 
\par
In our case, for $0<\delta<1$ we define $\psi_\delta:K\backslash G/K\rightarrow \R_{\geq 0}$ a smooth $L^1(AN)$-normalized function, supported in $B_\delta=\{g\in G:\max(\|g\|_{\text{op}},\|g^{-1}\|_{\text{op}})\leq 1+\delta\}$, a $\delta$-neighborhood of the identity element, where $\|\cdot\|_{\text{op}}$ is the operator norm. At this point we record facts about the spherical transform of the mollifier and its decay properties.

\begin{lemma}\cite[Lemma~2]{blomerlutsko} \label{mollifier decay properties}
Let $0<\delta<1$, any $A>0$ and $\psi_\delta$ as above. Also let 
\begin{equation*}
    \fpsi:=h_{\psi_\delta}, 
\end{equation*}
the spherical transform for this family of mollifiers. Then 
\begin{equation*}
\fpsi(\rho)=1 ,\quad  |\fpsi(\m)|\ll_{n,A}(1+\delta U)^{-A}.
\end{equation*}
\end{lemma}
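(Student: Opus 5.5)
The lemma is cited from Blomer--Lutsko, but I would prove it directly from the two-step description of the spherical transform.

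\emph{Normalization.} Since $\phi_{-\rho}=\phi_\rho\equiv 1$ (the spherical function depends only on the Weyl orbit, and $-\rho$ is in the orbit of $\rho$), we get
\begin{equation*}
\fpsi(\rho)=\int_G\psi_\delta(g)\,dg .
\end{equation*}
This total mass is $1$ by the normalization of $\psi_\delta$. Equivalently, via the Abel transform, it is the integral of $\psi_\delta$ over $AN$ with the weight $e^{\rho\log a}$ that appears in $\mathcal{A}_{\psi_\delta}$. I would choose the normalization constant so that this integral is exactly $1$. This fixes $\fpsi(\rho)=1$, and for $\m\in C_\rho+i\mathfrak{a}^\ast$ it also gives the trivial bound $|\fpsi(\m)|\ll 1$, because $|\phi_{-\m}|\le 1$ and $\psi_\delta\ge 0$.

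\emph{Decay.} Write
\begin{equation*}
\fpsi(\m)=\int_A(\mathcal{A}_{\psi_\delta})(a)\,e^{-\m\log a}\,da .
\end{equation*}
The key geometric fact is that the support of $\mathcal{A}_{\psi_\delta}$ lies in $\{\|\log a\|\ll\delta\}$. This follows from Kostant's convexity theorem: for $g=ank$ with $g\in B_\delta$ (so the singular values of $g$ lie in $[(1+\delta)^{-1},1+\delta]$), we have $A(g)\in\mathrm{conv}(W\cdot\log(\text{singular values}))$, which is $O(\delta)$.

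I would take $\psi_\delta$ to be a rescaled smooth bump, for instance $\psi_\delta(\exp X)$ a normalized $\eta(X/\delta)$ on $\mathfrak{p}$. Then $\mathcal{A}_{\psi_\delta}(\exp H)=\delta^{-r}F_\delta(H/\delta)$, where $F_\delta$ is smooth with all derivatives bounded uniformly in $\delta<1$. This uniformity holds because the Jacobians of the Iwasawa and polar coordinates near the identity are smooth and close to Euclidean.

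Next, integrate by parts $A$ times in $H$ against $e^{-i\t\cdot H}$. The factor $e^{-m\cdot H}$ with $m\in C_\rho$ is harmless, since $\|H\|\ll\delta<1$ keeps it and its derivatives $O(1)$. This gives $|\fpsi(\m)|\ll_A(\delta\|\t\|)^{-A}$. Combining with the trivial bound yields $(1+\delta U)^{-A}$.

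\emph{Main obstacle.} The main difficulty is the uniformity in $\delta$ of the smoothness of the Abel transform, that is, controlling derivatives of $F_\delta$ independently of $\delta$. I would handle this by writing $\mathcal{A}$ in exponential coordinates and using the fact that, after rescaling by $\delta$, the group structure degenerates smoothly to the Euclidean (Cartan motion group) structure. The rescaled Abel transform is then a smooth family in $\delta\in[0,1)$ of compactly supported functions.
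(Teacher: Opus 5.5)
\paragraph{Comparison with the paper.} The paper does not prove this lemma; it quotes it from Blomer--Lutsko, so there is no internal argument to compare against. Judged on its own, your proof is correct. Its steps are: Kostant convexity for the support of $\mathcal{A}_{\psi_\delta}$; rescaling to a family $F_\delta$ with $\delta$-uniform bounds; integration by parts against $e^{-i\delta\t(H')}$; and the trivial bound from $\psi_\delta\ge 0$, $\int_G\psi_\delta=1$, $|\phi_{-\m}|\le 1$.

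Writing it out also exposes a hypothesis the paper's formulation hides. With a $\delta$-independent constant, the bound cannot hold for every nonnegative, normalized, bi-$K$-invariant bump supported in $B_\delta$. For example, the admissible choice $\psi_{\delta^{10}}$ has $\fpsi(\m)\approx 1$ for $U\approx\delta^{-5}$. So fixing $\psi_\delta$ at scale exactly $\delta$, as you do, is necessary. Take $\eta\ge 0$ radial, so that it is $\mathrm{Ad}(K)$-invariant and $\psi_\delta$ is bi-$K$-invariant.

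\paragraph{Two small points.} First, your sentence about the weight is wrong. At $\m=\r$ the factor $e^{\r\log a}$ in the Abel transform cancels the factor $e^{-\r\log a}$ from the Fourier step. Hence $\fpsi(\r)=\int_A\int_N\psi_\delta(an)\,dn\,da=\int_G\psi_\delta$ is the \emph{unweighted} integral, which is the paper's $L^1(AN)$ normalization. Normalizing the weighted integral instead would give $\fpsi(\r)=1+O(\delta)$, and it would break the sandwich inequality, which needs $\int_G\psi_\delta=1$.

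Second, your \emph{main obstacle} closes quickly. Substitute $a=e^{\delta H'}$, $n=e^{\delta Y'}$; Haar measure on $N$ is Lebesgue measure on $\mathfrak{n}$ via $\exp$. Then
$F_\delta(H')=\kappa_\delta\, e^{\delta\r(H')}\int_{\mathfrak n}\eta(\Phi_\delta(H',Y'))\,dY'$, where $\Phi_\delta(H',Y')=(2\delta)^{-1}\log\bigl(e^{\delta Y'^{\top}}e^{2\delta H'}e^{\delta Y'}\bigr)$.
This is real-analytic in $(\delta,H',Y')$ up to $\delta=0$, where $\Phi_0(H',Y')=H'+\tfrac12(Y'+Y'^{\top})$. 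The constant $\kappa_\delta$ tends to a positive limit. The map $(H,Y)\mapsto\tfrac12\log(e^{Y^{\top}}e^{2H}e^{Y})$ is a diffeomorphism $\mathfrak a\oplus\mathfrak n\to\mathfrak p$ whose differential at $0$ is the invertible map $\Phi_0$. Hence the $(H',Y')$-support is uniformly bounded, and the uniform derivative bounds follow.

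\paragraph{An alternative route.} You can bypass the Abel transform entirely. We have $-\Delta\phi_{-\m}=\lambda(\m)\phi_{-\m}$ with $\Re\lambda(\m)=c_n(\|\r\|^2-\|m\|^2+\|\t\|^2)\ge c_n\|\t\|^2$ for some $c_n>0$, because $C_\r$ lies in the ball of radius $\|\r\|$. Self-adjointness of $\Delta$ and $|\phi_{-\m}|\le 1$ then give $|\fpsi(\m)|\le|\lambda(\m)|^{-k}\|\Delta^k\psi_\delta\|_{L^1(G)}\ll_k(\delta\|\t\|)^{-2k}$. This needs only that $\Delta$ has smooth coefficients in exponential coordinates near the identity. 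Your route, in exchange, also gives the support and scale-$\delta$ regularity of $\mathcal{A}_{\psi_\delta}$ itself.
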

Define the convolution 
\begin{equation*}
k_{T,\delta}:=k_T \ast \psi_\delta.
\end{equation*}
Theorem \ref{ourmainthm} for the original counting function will follow from bounds for the functions $k_{T(1+\delta)^{\pm1},\delta}$ and the double inequality 
\begin{equation}\label{sandwich}
    k_{T(1+\delta)^{-1},\delta}\leq k_{T}\leq k_{T(1+\delta),\delta},
\end{equation}
Let
\begin{eqnarray}\label{pm notations}
N_{\delta}^{\pm}(T;z,w)=\sum_{\gamma \in \Gamma}(k_{T(1+\delta)^{\pm 1},\delta})(z^{-1}\gamma w),
\end{eqnarray}
then the periodization of \eqref{sandwich} implies 
\begin{equation}\label{sandwich 2}
    N_{\delta}^{-}(T;z,w)\leq N_n(T;z,w)\leq N^{+}_\delta(T;z,w).
\end{equation}
Let us denote the spherical transforms of the approximations by
\begin{equation*}
h_{T_\pm}=h_{k_{T(1+\delta)^{\pm1}}}.
\end{equation*}
The multiplicativity of the spherical transform with respect to the convolution \eqref{convolution spherical} implies that 
\begin{equation*}
    h_{k_{T(1+\delta)^{\pm1}}\ast \psi_\delta}=h_{T_\pm}\cdot \fpsi.
\end{equation*}
With this notation the contribution of the constant parameter $\m=\rho$ to $N_{\delta}^{\pm}(T;z,w)$ is exactly 
\begin{equation}\label{smoothed constant}
    M_0(T(1+\delta)^{\pm 1}),
\end{equation}
with $M_0$ defined as in \eqref{m0}. This follows from the fact that $\fpsi(\rho)=1$, the definition of $N_{\delta}^{\pm}$ and Proposition \ref{pretrace formula}.
\par
Essential for our proof will be an estimate for the spherical transform of a Frobenius ball similar to  \cite[Lemma~3]{blomerlutsko}. However, a key difference is that we also provide an estimate for the derivative of the spherical transform, with respect to the radial parameter $T$. This will help us obtain better decay properties in the form of a unit-interval Sobolev inequality.
\begin{proposition}\label{new transform bound}
    Let $n\geq 3$, $\eta>0$, $\k>0$ and $C\geq 1$. Let $h_T=h_{k_{T}}$ and define the operator $D=T\partial_T$. Uniformly for $T \geq 2$, for parameters $\m$ in the spherical unitary spectrum and for $U\leq C T^{2-\k}$ we get 
    \begin{equation}\label{Dq estimate}
|D^{q}h_T(\mu)|\ll_{n,\eta,\kappa}T^{d/2+n\beta+\eta}U^{q-d/4-(1+\beta)/2}\mathcal{G}(\t)   \end{equation}
for $q=0,1$, where $\beta = \beta(\mu)$ and 
    \begin{equation}\label{G definition}
        \mathcal{G}(\t)=\prod_{j=1}^{n}(1+\max_i \t_i-\t_j)^{-1/2}+\prod_{j=1}^{n}(1+\t_j-\min_i \t_i)^{-1/2}.
    \end{equation}
\end{proposition}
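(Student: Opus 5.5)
The plan is to reduce to \cite[Lemma~3]{blomerlutsko} and reuse its stationary phase analysis, adding an analogous analysis for the $T$-derivative. The case $q=0$ is exactly the Blomer--Lutsko bound: they write $h_T(\mu)=\int_A \mathcal{A}_{k_T}(a)e^{-\mu\log a}\,da$ and express $\mathcal{A}_{k_T}$ through the volume of the fibre $\{n\in N: \|an\|_F\le T\}$. By homogeneity of the Frobenius norm, $k_T(g)=k_1(g/T)$ formally, but $g/T\notin\mathrm{SL}(n)$. So instead we parametrize $B_T$ by $a=\exp(H)$ and the entries $x_{ij}$ of $n$; the condition reads $\sum_i a_i^2(1+\sum_{j>i}x_{ij}^2)\le T^2$, up to ordering conventions. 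After rescaling $x_{ij}\mapsto (T/a_i)\,x_{ij}$, the fibre volume becomes an explicit function of $a$ and $T$. The stationary phase in the $\mathfrak{a}$-integral against $e^{-\mu\log a}$ then produces the factors $T^{d/2+n\beta}$, the power $U^{-d/4-(1+\beta)/2}$ and the root-type weight $\mathcal{G}(\tau)$. The restriction $U\le CT^{2-\kappa}$ guarantees that the boundary contributions of the truncated region, where the fibre degenerates, are dominated by the main stationary points up to $T^{\eta}$.

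For $q=1$, we differentiate under the integral. Since $k_T=\chi_{B_T}$, we have $D h_T(\mu)=T\partial_T\int_{B_T}\phi_{-\mu}(g)\,dg$, which equals $T$ times the integral of $\phi_{-\mu}$ over the sphere $\{\|g\|_F=T\}$ against the induced measure $dg/d\|g\|_F$. In the Abel--Fourier picture this means differentiating the fibre volume $V_T(a)=\mathrm{vol}\{x:\sum_i a_i^2(1+\cdots)\le T^2\}$ in $T$. After the rescaling, $V_T(a)$ is $T^{\text{power}}$ times a function of the ratios $a_i/T$, with cutoff $\prod(\ldots)_+^{\text{powers}}$. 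Hence $D$ acts as a first-order differential operator in $\log a$, namely a combination of the Euler operator $\sum_i \partial_{\log a_i}$ restricted to trace zero, plus multiplication by constants.

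The key step is then an integration by parts in $\log a$: derivatives in $\log a$ transfer onto the phase $e^{-\mu\log a}$ and produce factors of $\mu$, of size $\ll U$. Equivalently, one can observe that $T\partial_T$ of the rescaled amplitude replaces the exponent $\alpha$ of a factor $(1-\cdot)_+^{\alpha}$ by $\alpha-1$, which loses one order of smoothness. In the stationary-phase bookkeeping of Blomer--Lutsko, each degree of lost smoothness of the amplitude at the boundary costs exactly one power of $U$ in the decay. Either way we obtain the extra $U^{q}$ with the same $T$-power and the same $\mathcal{G}(\tau)$ structure. The terms in which $D$ hits the explicit power of $T$ only contribute $O(1)$ multiples of the $q=0$ bound, which is smaller since $U\ge1$.

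The main obstacle is making the ``one derivative costs one power of $U$'' principle rigorous uniformly near the degenerate boundary, where several constraints $a_i\approx T$ become simultaneously active and the amplitude is least smooth. I would handle this by repeating the dyadic decomposition near each wall used in \cite[Lemma~3]{blomerlutsko}, with the lowered exponent $\alpha-1$. On each dyadic piece at distance $\Delta$ from the wall, the bound is the minimum of a trivial estimate and a bound from repeated integration by parts. Summing over dyadic pieces gives at most the claimed estimate, with the $T^{\eta}$ absorbing the logarithmic loss.
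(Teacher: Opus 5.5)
The case $q=0$ is indeed \cite[Lemma~3]{blomerlutsko}. Your argument for $q=1$, however, has a genuine gap, and $q=1$ is the only new content of the proposition.

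In your parametrization the Abel transform of $k_T$ is, up to a constant, $(T^2-\|a\|^2)_+^{d/4}$ with $\|a\|^2=\sum_i a_i^2$. (The factor $e^{\rho\log a}$ cancels the Jacobian $\prod_i a_i^{-(n-i)}$ because $\prod_i a_i=1$.) Its homogeneity in $(a,T)$ refers to the scaling $a\mapsto a/T$, which leaves $A$. The Euler field $\sum_i\partial_{\log a_i}$ points in the direction $(1,\dots,1)$, and this direction is \emph{normal} to $\mathfrak{a}=\{H:\sum_i H_i=0\}$. So it is not a vector field on $A$, and there is nothing to integrate by parts over $\mathfrak{a}$. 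It also annihilates the natural extension of $e^{-\mu\log a}$, because $\sum_i\mu_i=0$, so no factor of $\mu$ can come out. If the step were valid, it would give $Dh_T=p\,h_T$ for a constant $p$, making $h_T(\mu)$ a pure power of $T$. That contradicts the fact that, for generic $\mu$, its residue expansion contains the distinct powers $T^{d/2+n\mu_j}$.

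Your fallback of lowering the exponent rests on a correct identity: $Dh_T(\mu)$ is a constant multiple of
\begin{equation*}
T^2\int_A (T^2-\|a\|^2)_+^{d/4-1}\,e^{-\mu\log a}\,da .
\end{equation*}
But you never estimate this integral. You appeal to a wall-by-wall dyadic decomposition and stationary-phase bookkeeping in \cite[Lemma~3]{blomerlutsko}, whereas that estimate comes from the Mellin--Barnes formula \eqref{spherical}. The principle ``one lost order of smoothness costs exactly one power of $U$'' is precisely what has to be proved. It must hold with three features intact: the anisotropic factor $\mathcal{G}(\tau)$, the saving $U^{-\beta/2}$ (which needs the unitary symmetry $\{\mu_j\}=\{-\overline{\mu_j}\}$), and uniformity as the $\mu_j$ approach coincidences modulo $2\mathbb{Z}$.

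The missing idea is to lift the constraint $\prod_i a_i=1$ by Mellin inversion. The Dirichlet integral then yields \eqref{spherical}. The variable $s$ is dual exactly to your normal Euler direction, and $D$ becomes multiplication by $d/2+ns$. Via $\Gamma(z+1)=z\Gamma(z)$ this turns the denominator into $\Gamma(ns/2+d/4)$, which is the Mellin form of your $\alpha\mapsto\alpha-1$. In Stirling's bound it costs one extra factor $1+|t|\ll U$ on contours around the poles $\mu_j-2k$.

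You must then rerun the contour argument uniformly, as the paper does:
\begin{itemize}
\item the far-left line is negligible because $U\le CT^{2-\kappa}$;
\item nearby poles are enclosed together in small discs at cost $T^{\eta}$;
\item residues with $k\geq 1$ reduce to $k=0$;
\item $m_j$ is replaced by $\beta$;
\item unitarity yields $U^{-\beta/2}$;
\item the endpoint products yield $\mathcal{G}(\tau)$.
\end{itemize}
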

\begin{proof}
    We begin with the Mellin representation of the spherical transform 
    \begin{equation}\label{spherical}
    h_T(\m)= \frac{\pi^{d/4}}{2^n}T^{d/2}\int_{(c)}T^{ns}\frac{\prod_{j=1}^{n}\Gamma\left(\frac{s-\m_j}{2}\right)}{\Gamma\left(ns/2+d/4+1\right)}\,\frac{ds}{2\pi i},
\end{equation}
for $c>0$ sufficiently large. This can be proven by the definition of the spherical transform employing the Haar measures for the subgroups $A$ and $N$ (see \cite[Lemma ~3]{blomerlutsko}). For the vertical line of integration $(c)$ we write $c=s+it$ and for the coordinates of the spectral parameter $\m_j=m_j+i\t_j$, where without loss of generality (due to Weyl invariance) we have ordered the imaginary parts such that $\t_1\geq...\geq \t_n$. Consequently we get $\sum\t_j=0$, $\sum m_j=0$ and $\t_1\asymp-\t_n\asymp\|\t\|_\infty$. Stirling's formula reads 
\begin{equation*}
    |\Gamma(x+it)|\sim\sqrt{2\pi}e^{-\pi|t|/2}|t|^{x-1/2}.
\end{equation*}
We will denote with $pol$ the exponential behavior and $exp$ the exponential behavior of the Gamma quotient
\begin{equation*}
    \frac{\prod_{j=1}^{n}\Gamma\left(\frac{s-\m_j}{2}\right)}{\Gamma\left(ns/2+d/4+1\right)}.
\end{equation*}
Then we get
\begin{eqnarray*}
&&pol\ll_{\sigma}\left(\frac{1}{1+|t|}\right)^{\frac{n\sigma}{2}+\frac{d}{4}+\frac{1}{2}}\prod_{j=1}^{n}(1+|t-\t_j|)^{\frac{\s-m_j-1}{2}},\\
&&exp= e^{-\frac{\pi}{4}\mathcal{E}(t)},\quad \mathcal{E}(t)=\sum_{j=1}^{n}|t-\t_j|-n|t|.
\end{eqnarray*}
On a vertical line $(c)$ away from the poles, the far tail of the Gamma quotient is bounded by 
\begin{equation*}
    O_{n,c}(\sigma^{-\frac{n^2+n+2}{4}}).
\end{equation*}
The same tail after multiplication by $d/2+ns$ is integrable for every $n\geq 3$. Dominated convergence proves continuous differentiability and allows the passage of the $D$ operator inside the integral. This also provides sufficient decay to allow us to move freely the line of integration. We get 
\begin{equation*}
    D^qh_T(\m)=\frac{\pi^{d/4}}{2^n}T^{d/2}\int_{(c)}(d/2+ns)^{q}T^{ns}\frac{\prod_{j=1}^{n}\Gamma\left(\frac{s-\m_j}{2}\right)}{\Gamma\left(ns/2+d/4+1\right)}\,\frac{ds}{2\pi i},
\end{equation*}
and using $\Gamma(z+1)=z\Gamma(z)$ this becomes
\begin{equation}\label{Dq integral}
    D^qh_T(\m)=\frac{\pi^{d/4}}{2^{n-q}}T^{d/2}\int_{(c)}T^{ns}\frac{\prod_{j=1}^{n}\Gamma\left(\frac{s-\m_j}{2}\right)}{\Gamma\left(ns/2+d/4+1-q\right)}\,\frac{ds}{2\pi i}.
\end{equation}
For $q=0$ the assertion is proven in \cite[Lemma ~3]{blomerlutsko}, so for completeness we need to include the case $q=1$. We can also assume that $0<\k<2$, since a larger value restricts the range further and bounded admissible values of $T$ can be absorbed in the constant. Let
\begin{equation*}
    Q_{q}(s,\m)=\frac{\prod_j \Gamma((s-\m_j)/2)}{\Gamma(ns/2+d/4+1-q)}.
\end{equation*}
The polynomial and exponential behavior of $Q_q(s,\m)$ follow in a similar fashion as above, where for $q=1$ we simply get an additional factor $-q$ in the exponent of the first term of $pol$, i.e.
\begin{equation}\label{gamma quotient bounds}
    |Q_{q}(s,\m)|\ll_\s \left(\frac{1}{1+|t|}\right)^{\frac{n\sigma}{2}+\frac{d}{4}+\frac{1}{2}-q}\prod_{j=1}^{n}(1+|t-\t_j|)^{\frac{\s-m_j-1}{2}}\cdot e^{-\frac{\pi}{4}\mathcal{E}(t)}
\end{equation}
The possible poles of $Q_{q}(s,\m)$ are 
\begin{equation}\label{poles}
    p_{j,k}=\m_j-2k,\quad 1\leq j\leq n,\quad k=0,1,... \quad .
\end{equation}
Some might be canceled by zeros of the reciprocal denominator. It is convenient to retain them in the list since counting a canceled pole causes no difficulty.
\\\\
\textbf{The contour to the far left.} Choose a large number $L$, with $L    \geq n+2$, so that the line $\Re s=-L$ has a fixed positive distance, depending only on $n$, from every $\Re p_{j,k} =m_j-2k$ of a numerator pole. Such a choice can be made in any prescribed unit interval sufficiently far to the left; for each $j$, the numbers $m_j-2k$ have spacing $2$, so only $O_n(1)$ of them are relevant to that interval. We shall increase the prescribed interval and hence $L$, when a larger saving is needed.
\par
On $s=-L+it$, with $|t|\leq 2U$ and $L$ large enough, all numerator exponents in \eqref{gamma quotient bounds} are negative (recall that $\Re\m\in C_\rho$). Dropping their factors and simply bounding the exponential contribution by $O(1)$, we get for a fixed constant $B_n$,
\begin{equation*}
    |Q_q(-L+it,\m)|\ll_{n,L}U^{nL/2 +B_n}.
\end{equation*}
The contribution of this segment to the integral \eqref{Dq integral} is 
\begin{equation}\label{far left 1}
    O_{n,L}(T^{d/2-nL} U^{nL/2+B_n})=O_{n,L,C}(T^{d/2-n\k L/2 +2B_n}).
\end{equation}
For $|t|>2U$ every $1+|t-\t_j|$ appearing in the product in \eqref{gamma quotient bounds} is $\asymp$ to $1+|t|$. Working out the exponents in \eqref{gamma quotient bounds} the contribution of the tail to the integral is
\begin{equation}\label{far left 2}
    O_{n,L}(T^{d/2-nL} U^{1-\frac{d}{4}-\frac{n+1}{2}+q})=O_{n,L}(T^{d/2-nL}U^{1-A_n+q})
\end{equation}
with $A_n=d/4+(n+1)/2> 2$, for every $ n\geq 3$.
\par
It follows that the whole integral on the far left line is $O(T^{-B})$ for any prescribed $B>0$, upon choosing $L$ sufficiently large in terms of $n,\k , B$. The constants may depend on $L$, which is now fixed. 
\\\\
\textbf{Grouping the poles.} Between the initial vertical line $(c) = \{\Re s = c\}$ and the vertical line $(-L) = \{\Re s=-L\}$ there are at most $M=M(n,L)$ poles $p_{j,k}$, counting multiplicity. This number is independent of $U$. Merely requiring the $\m_j$ to be distinct would not make the poles distinct: $\m_j-\m_i\in 2\Z$ can also produce coincidences. Moreover, estimates of  separate simple residues are not uniform as such a coincidence is approached.
\par
Choose a small number $a$. Around each crossed point $p_{j,k}$, draw a disc of radius $a$ and take the boundaries of the connected components of their union (see figure \ref{fig:poles pic}). The radius is chosen smaller than the fixed separation from both vertical lines. Every boundary point has distance at least $a$ from each crossed pole and has a fixed positive distance from every uncrossed pole. The total boundary length is at most $2\pi aM$, and the diameter of each component is at most $2aM$. The boundaries may be treated as piecewise smooth contours; an arbitrary small adjustment avoids any tangencies. The residue theorem expresses the sum of residues inside a component as the integral on its boundary, with no separation assumption on the individual poles.

Select a point $p_{j,k}=m_j-2k+i\t_j$ in a given component and parametrize $s=\sigma+it$ the points on its boundary. Then
\begin{equation*}
    \sigma=m_j-2k+O(Ma),\quad t=\t_j+O(Ma).
\end{equation*}
Set 
\begin{equation*}
    U_j=1+|\t_j|,\quad d_{ij}=1+|\t_j-\t_i|,\quad d_j=\min(\t_1-\t_j,\t_j-\t_n),
\end{equation*}
where the imaginary parts $\tau_j$ have been ordered. 
\par
\begin{figure}[H]
\centering
\begin{tikzpicture}
[
    scale=1.0,
    redboundary/.style={draw=red!80!black, line width=1.4pt},
    vline/.style={draw=blue!60!black, thick, dashed},
    uncrossed/.style={circle, fill=black, inner sep=0pt, minimum size=2.2pt},
    poledot/.style={circle, fill=black, inner sep=0pt, minimum size=2.4pt},
    label/.style={font=\small}
]

\def\aa{0.42}
\def\xL{-2.6}        % left vertical line  s = -L + it
\def\xR{3.4}         % right vertical line s = c + it
\def\yLo{-2.4}
\def\yHi{2.6}

% vertical lines
\draw[vline] (\xL,\yLo) -- (\xL,\yHi);
\node[label,blue!60!black] at (\xL,\yHi+0.18) {$s=-L+it$};
\draw[vline] (\xR,\yLo) -- (\xR,\yHi);
\node[label,blue!60!black] at (\xR,\yHi+0.18) {$s=c+it$};

% pole centres
\coordinate (P1) at (-1.05, 0.95);
\coordinate (P2) at (-0.62, 0.62);
\coordinate (P3) at (-0.90, 0.28);
\coordinate (P4) at (1.35, 1.55);
\coordinate (P5) at (1.82, 1.35);
\coordinate (P6) at (1.55, 0.95);
\coordinate (P7) at (0.85,-1.45);
\coordinate (P8) at (2.45,-0.95);
\coordinate (P9) at (2.85,-1.25);

% ---------- COMPONENT 1 : P1,P2,P3 ----------
\begin{scope}
  \fill[gray!15] (P1) circle (\aa);
  \fill[gray!15] (P2) circle (\aa);
  \fill[gray!15] (P3) circle (\aa);
\end{scope}
\draw[redboundary] (P1) circle (\aa);
\draw[redboundary] (P2) circle (\aa);
\draw[redboundary] (P3) circle (\aa);
\begin{scope}
  \fill[gray!15] (P1) circle (\aa);
  \fill[gray!15] (P2) circle (\aa);
  \fill[gray!15] (P3) circle (\aa);
\end{scope}
\draw[gray!55, line width=0.6pt] (P1) circle (\aa);
\draw[gray!55, line width=0.6pt] (P2) circle (\aa);
\draw[gray!55, line width=0.6pt] (P3) circle (\aa);
\node[poledot] at (P1) {};
\node[poledot] at (P2) {};
\node[poledot] at (P3) {};

% ---------- COMPONENT 2 : P4,P5,P6 ----------
\begin{scope}
  \fill[gray!15] (P4) circle (\aa);
  \fill[gray!15] (P5) circle (\aa);
  \fill[gray!15] (P6) circle (\aa);
\end{scope}
\draw[redboundary] (P4) circle (\aa);
\draw[redboundary] (P5) circle (\aa);
\draw[redboundary] (P6) circle (\aa);
\begin{scope}
  \fill[gray!15] (P4) circle (\aa);
  \fill[gray!15] (P5) circle (\aa);
  \fill[gray!15] (P6) circle (\aa);
\end{scope}
\draw[gray!55, line width=0.6pt] (P4) circle (\aa);
\draw[gray!55, line width=0.6pt] (P5) circle (\aa);
\draw[gray!55, line width=0.6pt] (P6) circle (\aa);
\node[poledot] at (P4) {};
\node[poledot] at (P5) {};
\node[poledot] at (P6) {};

% ---------- COMPONENT 3 : P7 ----------
\fill[gray!15] (P7) circle (\aa);
\draw[redboundary] (P7) circle (\aa);
\node[poledot] at (P7) {};

% ---------- COMPONENT 4 : P8,P9 ----------
\begin{scope}
  \fill[gray!15] (P8) circle (\aa);
  \fill[gray!15] (P9) circle (\aa);
\end{scope}
\draw[redboundary] (P8) circle (\aa);
\draw[redboundary] (P9) circle (\aa);
\begin{scope}
  \fill[gray!15] (P8) circle (\aa);
  \fill[gray!15] (P9) circle (\aa);
\end{scope}
\draw[gray!55, line width=0.6pt] (P8) circle (\aa);
\draw[gray!55, line width=0.6pt] (P9) circle (\aa);
\node[poledot] at (P8) {};
\node[poledot] at (P9) {};

% radius annotation
\draw[<->,thick] (P7) -- ($(P7)+(\aa,0)$);
\node[label] at ($(P7)+(0.30,-0.22)$) {$a$};

% separation annotation
\draw[<->,thick,blue!60!black] (\xL,\yLo-0.35) -- ($(-1.42,\yLo-0.35)$);
\node[label,blue!60!black] at ($(\xL-0.18,\yLo-0.62)$) {$ \ \ \ \ \ \ \ \ \ \ \ \ \ >a$};

% diameter annotation
\draw[<->,thick,red!75!black] (1.00,2.35) -- (2.22,2.35);
\node[label,red!75!black] at (1.61,2.55) {$\leq 2aM$};

\end{tikzpicture}
\caption{Grouping the poles.}
    \label{fig:poles pic}
\end{figure}

Using the bounds in \eqref{gamma quotient bounds} the integral in \eqref{Dq integral} along the component boundary, before multiplying by factor the $T^{d/2}$, is bounded by
\begin{equation}\label{component bound}
C_{n,L,a} T^{C_n Ma}U^{C_n Ma}R_{j,k,q},
\end{equation}
where
\begin{equation*}%\label{Rjkq}
R_{j,k,q}=T^{nm_j-nk} U_j^{-nm_j/2+nk-d/4-1/2+q} e^{-\pi d_j/2}\prod_{i=1}^{n}d_{ij}^{(m_j-m_i-1)/2-k}.
\end{equation*}
\par 
We spell out the loss in \eqref{component bound}. Perturbating $\sigma$ by $O(Ma)$ changes $T^{n\sigma}$ by at most $T^{C_n Ma}$. It changes the exponents of the polynomials in \eqref{gamma quotient bounds} by $O_n(Ma)$, producing at most $U^{C_n Ma}$. Bounded perturbations of the imaginary argument change $1+|t|$ and $1+|t-\t_i|$ by bounded ratios. Finally for the exponential, we use $\sum_{j=1}^{n}|t-\t_j|-n|t|\geq 2\min(|\t_1-t|,|t-\t_n|),$ for $\t_n\leq t\leq \t_1$, and $0$ outside this region and also the fact that $\mathcal{E}(t)$ is Lipschitz   with constant at most $2n$ so its exponential changes only by a bounded factor. These observations prove \eqref{component bound}, including at multiple poles.
\\\\
\textbf{Reduction to $R_{j,0,q}$.} The ratio between $R_{j,k,q}$ and $R_{j,0,q}$ is 
\begin{equation*}
   \left(\frac{U_j}{T}\right)^{nk}\prod_{i=1}^{n}d_{ij}^{-k}.
\end{equation*}
Since $U_j\leq U\leq C T^{2-\k}$, this ratio is bounded by a constant depending only on $n,L,C$. Therefore it suffices to estimate $R_{j,0,q}$. This argument is valid even if a pole that formally appears in \eqref{poles} is canceled: we are only bounding the corresponding contour integral.
\\\\
\textbf{Replacement of the real part by its maximum.} In the explicit expression of $R_{j,0,q}$, we form $R'_{j,0,k}$, by replacing the occurrences of $m_j$ outside the list $m_1,...,m_n$ by $\b=\|\Re \mu\|_\infty$, while keeping each $m_i$ in that list fixed, i.e.
\begin{equation}\label{R'}
    R'_{j,0,q}=T^{n\beta} U_j^{-n\beta/2-d/4-1/2+q} 
    e^{-\pi d_j/2}\prod_{i=1}^{n}d_{ij}^{(\beta-m_i-1)/2}.
\end{equation}
The ratio of the original expression to the new one is 
\begin{equation*}
\frac{R_{j,0,q}}{R'_{j,0,q}}=T^{n(m_j-\b)}U_j^{-n(m_j-\b)/2}\prod_{i=1}^{n}d_{ij}^{\frac{m_j-\b}{2}}\leq \left(\frac{U_j}{T^2}\right)^{n(\b-m_j)/2}\ll_{n,C} 1,
\end{equation*}
where we used that $m_j\leq \b$, hence every factor in the product is at most $1$ and then used $U_j\leq C T^{2-\k}$. This provides the precise comparison behind the monotonicity in the real part used in \cite[p.10]{blomerlutsko}.
\\\\
\textbf{Removal of the interior imaginary parts.} If $U$ is bounded, all the remaining spectral factors are bounded above and below by constants depending only on $n$ and there is nothing further to prove. Suppose that $U$ is large. Since $\t_1\asymp -\t_n\asymp \|\t\|_\infty\asymp_{n} U$, then either $U\asymp U_j$, or $d_j\geq cU $ for a fixed $c>0$ depending only on $n$. In the latter case $e^{-\pi d_j /2}$ absorbs every fixed power of $U$. All other spectral factors have exponents bounded in terms of $n$; moreover, with $\mathcal{G}(\t)$ as in \eqref{G definition} 
\begin{equation*}%\label{G lower}
\mathcal{G}(\t)\gg_n U^{-(n-1)/2}.
\end{equation*}
Indeed, each factor in the products of $\eqref{G definition}$, without the power, are smaller or equal than a quantity which is $\asymp U$, the negative power switches the inequality and there are $n-1$ factors in the product.
 \par
 Each term in the product of \eqref{R'} is $\ll U$ and the exponents are bounded by $(\beta-1)/2$. There are $n-1$ factors in the sum contributing, since the $i=j$ term is equal to $1$, therefore the whole product is $\ll_n U^{(n-1)(\beta-1)/2}$ and in this case
 \begin{equation*}
R'_{j,0,q}\ll T^{n\beta}U^{q-d/4-(1+\beta)/2}\mathcal{G}(\t),
 \end{equation*}
satisfying the desired estimate.
We may henceforth assume $U_j\asymp_n U$.
\\\\
\textbf{The use of unitarity.} After replacement by $\beta$ all numbers $\beta-m_i$ are non negative and 
\begin{equation*}
    \sum_{i=1}^{n}\frac{\b-m_i}{2}=\frac{n\b }{2}.
\end{equation*}
There is an index $i_0$ such that 
\begin{equation}\label{i_0 imaginary}
\t_{i_0}=\t_j, \quad m_{i_0}\leq 0.
\end{equation}
Indeed, if $m_j\leq 0$ take $i_0=j.$ If $m_j>0$, the unitary symmetry  supplies the coordinate $-\overline{\m_j}=-m_j+i\t_j$. For this index $d_{i_0 ,j}=1$; hence the exponent $(\b-m_{i_0})/2$, which is at least $\beta/2$, contributes no power of $U$. All remaining $n-1$ factors satisfy $d_{ij}=O_n(U)$. It follows that 
\begin{equation}\label{endpoints 1}
\prod_{i=1}^{n}d_{ij}^{\frac{\beta-m_i}{2}}\ll_n U^{n\beta/2-\beta/2}. 
\end{equation}
Dividing this estimate by $U_j^{n\beta/2}$ leaves precisely the factor $U^{-\beta/2}$. This argument explains why the unitary symmetry, and not just a bounded real strip, is needed for the stated sharp estimate. It also proves the estimate when some or all imaginary coordinates coincide.
\\\\
\textbf{Reduction to the endpoint products.} Choose the closest endpoint $\psi\in\{\t_1,\t_n\}$ to $\t_j$. Then $|\psi-\t_j|=d_j$, and for every $i$ 
\begin{equation*}
    1+|\psi-\t_i|\leq(1+d_j)(1+|\t_j-\t_i|).
\end{equation*}
Thus 
\begin{equation}\label{endpoints 2}
    e^{-\pi d_j/2}\prod_{i=1}^{n}d_{ij}^{-1/2}\leq e^{-\pi d_j/2}(1+d_j)^{n/2}\prod_{i=1}^{n}(1+|\psi-\t_i|)^{-1/2}\ll_{n}\mathcal{G}(\t),
\end{equation}
where the exponential kills the polynomial growth in $d_j$ and the remaining product is either one of the products of $\mathcal{G}(\tau)$ depending on which value $\psi$ takes.
Combining \eqref{endpoints 1} and \eqref{endpoints 2} and recalling $U\asymp_n U_j$, we get
\begin{equation}\label{endpoints 3}
    R_{j,k,q}\ll_{n,L,C}T^{n\beta} U^{q-d/4-(1+\beta)/2} \mathcal{G}(\t).
\end{equation}
\\
\textbf{Completion of the contour estimate.} There are at most $M=M(n,L)$ components. Fix their radius $a$ so small so that the product $T^{C_n Ma}U^{C_n Ma}$ is $O(T^{\eta})$, for some $\eta>0$ and small. This is possible, since $U\ll T^2$. Multiply \eqref{endpoints 3} by the outside factor $T^{d/2}$ and sum over the components. The contribution of the far left line can be made smaller than any fixed negative power of $T$, by \eqref{far left 1} and \eqref{far left 2}. It can therefore be absorbed in \eqref{Dq estimate}: using $\b\leq (n-1)/2$,  and $U\ll T^2$ the right hand side of \eqref{Dq estimate} is bounded below by some negative power of $T$. This completes the proof.
\end{proof}

\begin{remark}
    The contour argument distinguishes two different uniformity issues. The discs control coincidences of the complex poles $\m_j-2k $ and the endpoint products \eqref{i_0 imaginary} control coincidences of the imaginary coordinates. Both are required for a statement uniform across the full automorphic spectrum. At an exact multiple pole, direct residue formulas contain powers of $\log T$. The small contour argument handles these and nearby poles at once; the resulting loss is the arbitrarily small factor $T^{\eta}$. 
\end{remark}

\subsection{The exact main term, its asymptotic and coarse bounds}

Let $V$ denote the volume used in the spectral normalization, and
\begin{equation*}
    M_0(T)=\frac{\hbox{vol}(B_T)}{V},\quad M_n(T)=c_nT^d,
\end{equation*}
as in \eqref{setting}. The distinction between $M_0$ and $M_n$ is useful: the constant spectral coefficient is the exact ball volume, whereas the stated asymptotic formula uses its leading term. 
\begin{lemma}[Approximation of the main term]\label{ball estimates}
For every $\eta>0$, as $T\to\infty$,
\begin{equation}  \label{volume of ball bound}
\hbox{vol}(B_T) = a_nT^d+O_{n,\eta}(T^{d-n+\eta}),  \end{equation}
and 
\begin{equation} \label{volume of D ball bound}
D\hbox{vol}(B_T) = da_n \, T^d+O_{n,\eta}(T^{d-n+\eta}),
\end{equation}
where
\begin{equation} \label{leading term coefficient}
a_n = \frac{\pi^{\frac{d}{4}}}{2^{n-1}\Gamma(d/2+1)}\prod_{j=1}^{n-1}\Gamma(j/2).
\end{equation}
In particular
\begin{equation}\label{main term asymptotics}
    M_0(T)=M_n(T)+O_{n,\eta}(T^{d-n+\eta})
\end{equation}
and
\begin{equation} \label{main term asymptotics2}
D \, M_0(T)= dc_nT^d+O_{n,\eta}(T^{d-n+\eta}),
\end{equation}
with $c_n=a_n/V$. Moreover, for $0<\delta\leq1/2$ and sufficiently large $T$,
\begin{equation}\label{main term approximation}
|M_0(T(1+\delta)^{\pm 1})-M_0 (T)|\ll_n\delta T^d.
\end{equation}
\end{lemma}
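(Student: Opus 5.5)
The plan is to get both asymptotics from the Mellin representation \eqref{spherical} of the spherical transform, evaluated at the trivial parameter $\mu=\rho$. Since $\phi_{-\rho}\equiv 1$, we have
\begin{equation*}
h_T(\rho)=\int_G k_T(g)\,dg=\hbox{vol}(B_T).
\end{equation*}
Hence
\begin{equation*}
\hbox{vol}(B_T)=\frac{\pi^{d/4}}{2^n}T^{d/2}\int_{(c)}T^{ns}\frac{\prod_{j=1}^{n}\Gamma\left(\frac{s-\rho_j}{2}\right)}{\Gamma(ns/2+d/4+1)}\,\frac{ds}{2\pi i},
\end{equation*}
and similarly $D\,\hbox{vol}(B_T)=D h_T(\rho)$ is given by \eqref{Dq integral} with $q=1$. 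Both formulas and the interchange of $D$ with the integral are already justified in the proof of Proposition \ref{new transform bound}. I will shift the contour to the left, just past $\Re s=\rho_1-2=\frac{n-1}{2}-2$. The integrand's decay on vertical lines, with the tail bound $O(\sigma^{-(n^2+n+2)/4})$ and the exponential factor $e^{-\frac{\pi}{4}\mathcal E(t)}$ (here all $\tau_j=0$, so $\mathcal E\equiv 0$ and the decay is polynomial of order $\gg 1$), makes the horizontal segments negligible.

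The poles crossed are $s=\rho_j-2k$. The rightmost is $s=\rho_1=\frac{n-1}{2}$, which is simple, since the next candidate with the same parity class is at $\rho_1-2$, while $\rho_2=\rho_1-1$ lies in the other class. Its residue is
\begin{equation*}
2\,T^{n(n-1)/2}\frac{\prod_{j\ge2}\Gamma\left(\frac{\rho_1-\rho_j}{2}\right)}{\Gamma\left(d/2+1\right)}.
\end{equation*}
Since $\rho_1-\rho_j=j-1$, this gives the term $a_nT^d$ with $a_n$ exactly as in \eqref{leading term coefficient}, because $n(n-1)/2+d/2=d$ and $\frac{\pi^{d/4}}{2^n}\cdot 2=\frac{\pi^{d/4}}{2^{n-1}}$. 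The next pole is $\rho_2=\rho_1-1$. It may be double, since $\rho_2=\rho_1-2+1$, although it could collide with $\rho_3+\dots$ only in the other parity class. It contributes at most $T^{d/2+n\rho_2}(\log T)^{O(1)}=T^{d-n}(\log T)^{O(1)}$, which is $\ll T^{d-n+\eta}$. All further poles and the shifted line are smaller still. To avoid multiplicity bookkeeping, I would reuse the small-disc grouping of Proposition \ref{new transform bound}: enclose every crossed pole other than $\rho_1$ in discs of radius $a$, and bound each boundary integral by $T^{d/2+n\Re p+C_nMa}$. This yields the $T^\eta$ loss directly and proves \eqref{volume of ball bound}. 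The same computation applied to \eqref{Dq integral} with $q=1$ gives the residue at $\rho_1$ multiplied by the factor $(d/2+n\rho_1)=d$. Equivalently, one can differentiate the residue expansion termwise, since $D T^{d}=dT^d$ and $D$ of $T^{x}(\log T)^m$ preserves the bound. This proves \eqref{volume of D ball bound}.

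Dividing by $V$ gives \eqref{main term asymptotics} and \eqref{main term asymptotics2} with $c_n=a_n/V$. One should check that this agrees with the explicit $c_n$ of \eqref{sharperror}, using $V=\prod\zeta(j)\cdot(\text{Haar constants})$. For the final claim, $\hbox{vol}(B_T)$ is monotone in $T$, so by the mean value theorem,
\begin{equation*}
|M_0(T(1+\delta)^{\pm1})-M_0(T)|\le \sup_{T'\asymp T}|DM_0(T')|\,\bigl|\log(1+\delta)\bigr|.
\end{equation*}
Here $\sup_{T'\asymp T}|DM_0(T')|\ll T^d$ by \eqref{main term asymptotics2}, and $|\log(1+\delta)|\le\delta$, which gives $\ll_n\delta T^d$.

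The main obstacle is the possible multiplicity of the poles just below $\rho_1$, and making the error exponent exactly $d-n+\eta$ rather than $d-n$ times logarithms. The disc-grouping device handles this uniformly. A secondary task is to confirm that the residue constant matches \eqref{leading term coefficient}, and hence the stated $c_n$.
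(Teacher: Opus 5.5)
Your argument is correct and is essentially the paper's proof: the Mellin representation at $\mu=\rho$, the simple leading pole at $s_0=(n-1)/2$ with residue $a_nT^d$, the extra factor $d/2+ns_0=d$ for $q=1$, and integration of $DM_0$ over a logarithmic interval of length at most $\delta$. The one difference is that the paper stops the contour at $\Re s=s_0-1+2\eta/n$, just to the right of the next pole, so only the leading simple pole is crossed and the multiplicity issue you flag as the main obstacle never arises. Moreover, $s_0-1$ is hit only by $(j,k)=(2,0)$ and is therefore simple, and at $\mu=\rho$ the poles sit at fixed positions, so ordinary residue calculus suffices without the disc-grouping device.
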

\begin{proof}
    The constant spherical function satisfies $\phi_\rho (g)=1$. Using the spherical transform  \eqref{spherical 1 step} and $h_T$ as in Proposition \ref{new transform bound}  then $h_{T}(\rho)=\hbox{vol}(B_T)$. Applying \eqref{spherical} with $\m=\rho$ we get 
    \begin{equation}\label{mellin for constant term}
        h_T(\rho)=\frac{\pi^{\frac{d}{4}}}{2^{n}}T^{d/2}\int_{(c)}T^{ns}\frac{\prod_{j=1}^n\Gamma((s-\rho_j)/2)}{\Gamma(ns/2+d/4+1)}\,\frac{ds}{2\pi i},
    \end{equation}
    with $\rho_j=\frac{n-2j+1}{2}$.
    The possible poles are 
    \begin{equation*}
        p_{j,k}=\rho_j-2k,
    \end{equation*}
    and the rightmost is $s_0=(n-1)/2$.
    This is a simple pole, since only the first numerator factor has a pole there and the denominator is nonzero. The residue of $\Gamma((s-s_0)/2)$ is $2$. For the other factors,
    \begin{equation*}
        \frac{s_0-\rho_j}{2}=\frac{j-1}{2},\, (2\leq j\leq n),\quad \frac{ns_0}{2}+\frac{d}{4}+1=\frac{d}{2}+1.
    \end{equation*}
    The outside and inside powers of $T$ combine to give $T^{d/2+ns_0}=T^{d/2+n(n-1)/2}=T^d$. Hence the residue in \eqref{mellin for constant term}  is exactly $a_n T^d$, with $a_n$ as in \eqref{leading term coefficient}.
    The next possible numerator pole is at $s_0-1=(n-3)/2$. Choose a line just to its right, say at $\Re s=s_0-1+2\eta/n$; if necessary first replace $\eta$ by a smaller positive number to keep this line below $s_0$. The contour shift crosses only the rightmost pole and the new integral is absolutely convergent by \eqref{gamma quotient bounds}. Its power of $T$ is 
    \begin{equation*}
        d/2+n(s_0-1+2\eta/n)=d-n+\eta.    \end{equation*}
    This proves \eqref{volume of ball bound}. Apply the same argument to \eqref{Dq integral} with $q=1$. Its denominator at the leading pole is $\Gamma(d/2)$ instead of $\Gamma(d/2+1)$ multiplying the residue by $d/2$. The remaining integral has the same power of $T$ proving \eqref{volume of D ball bound}. Dividing the two asymptotics by $V$ proves \eqref{main term asymptotics}. Finally \eqref{volume of D ball bound} gives $DM_0(Y)\ll_{n} Y^{d}$ for large $Y$. Integrating with respect to $dY/Y$ between $T$ and $T(1+\delta)^{\pm1}$ proves \eqref{main term approximation}. 
\end{proof}

\begin{lemma}[Coarse bounds]  \label{coarse bounds}
Uniformly for parameters satisfying \eqref{spectral parameter conditions} and for sufficiently large $T$
\begin{equation*}
    |h_T(\m)|\leq \hbox{vol}(B_T)\ll_n T^d,\quad |Dh_T(\m)|\leq D\hbox{vol}(B_T)\ll_n T^d.
\end{equation*}
The estimates have no restrictions on $U$.
    
\end{lemma}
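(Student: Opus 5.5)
The bound on $h_T$ comes straight from $|\phi_{-\mu}(g)|\le 1$, and the bound on $Dh_T$ from an explicit formula for $Dk_T$ as a shell measure. Neither step uses the Mellin representation, so no restriction on $U$ enters.

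\textbf{The bound for $h_T$.} By \eqref{spherical 1 step},
\begin{equation*}
h_T(\mu)=\int_{B_T}\phi_{-\mu}(g)\,dg .
\end{equation*}
Since $-\mu$ again satisfies \eqref{spectral parameter conditions}, we have $|\phi_{-\mu}(g)|\le 1$ for all $g$. This gives
\begin{equation*}
|h_T(\mu)|\le \hbox{vol}(B_T)\ll_n T^d
\end{equation*}
for large $T$, using \eqref{volume of ball bound}.

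\textbf{The bound for $Dh_T$.} Write $F(T)=\hbox{vol}(B_T)$ and pass to "polar coordinates" for the Frobenius norm. Let $\nu_\rho$ be the pushforward of Haar measure under $g\mapsto\|g\|_F$. Then
\begin{equation*}
h_T(\mu)=\int_{0}^{T}\Phi_\mu(\rho)\,d\nu_\rho ,
\end{equation*}
where $\Phi_\mu(\rho)$ is the average of $\phi_{-\mu}$ over the level set $\{\|g\|_F=\rho\}$ with respect to the disintegrated measure.

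The measure $\nu$ is absolutely continuous with a continuous density $F'(\rho)\ge 0$. This holds because $g\mapsto \|g\|_F^2$ is a real-analytic function on $G$ whose gradient vanishes only at the minimum $\|g\|_F^2=n$, attained on $K$. Away from that critical value the coarea formula applies; alternatively, regularity is read off from the absolutely convergent Mellin integral \eqref{Dq integral} at $\mu=\rho$.

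With this density we get
\begin{equation*}
Dh_T(\mu)=T\,F'(T)\,\Phi_\mu(T).
\end{equation*}
Since $|\Phi_\mu|\le 1$, this yields
\begin{equation*}
|Dh_T(\mu)|\le T F'(T)=D\hbox{vol}(B_T)\ll_n T^d ,
\end{equation*}
the last step by \eqref{volume of D ball bound}.

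An equivalent, more hands-on route: for $T'>T$,
\begin{equation*}
|h_{T'}(\mu)-h_T(\mu)|=\Bigl|\int_{T<\|g\|_F\le T'}\phi_{-\mu}\Bigr|\le F(T')-F(T).
\end{equation*}
Dividing by $\log(T'/T)$ and letting $T'\to T$ gives the claim, provided $h_T(\mu)$ is differentiable in $T$. That differentiability is exactly what is shown in the proof of Proposition \ref{new transform bound}: the Mellin integral \eqref{Dq integral} converges absolutely for every $\mu$, with no condition on $U$, so $D$ may be passed inside.

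\textbf{Main obstacle.} The only delicate point is justifying differentiability and the density formula uniformly in $\mu$. I would handle it with the difference-quotient argument just described, so that no coarea formula is needed. All bounds are uniform in $\mu$ because they rest only on $|\phi_{-\mu}|\le 1$.
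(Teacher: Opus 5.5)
Your proof is correct and is essentially the paper's argument. The first bound comes from integrating $|\phi_{-\mu}|\le 1$ over $B_T$. The second comes from the nested-ball estimate $|h_{T'}(\mu)-h_T(\mu)|\le \hbox{vol}(B_{T'})-\hbox{vol}(B_T)$, combined with the differentiability in $T$ supplied by the absolutely convergent Mellin integral in the proof of Proposition \ref{new transform bound}, and then \eqref{volume of D ball bound}. Your polar-coordinates/coarea variant is also valid, since the only critical points of $\|g\|_F^2$ on $G$ lie on $K$, but it is unnecessary once you use the difference quotient.
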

\begin{proof}
  On the unitary spherical spectrum we have $|\phi_{-\m}(g)|\leq 1$. Integration over the ball proves the first inequality. For $Y>T$ the nesting of the balls gives 
  \begin{equation*}
      |h_Y(\m)-h_T(\m)|\leq \int_{B_Y\backslash B_T}|\phi_{-\m}(g)|\,dg\leq \hbox{vol}(B_Y)-\hbox{vol}(B_T).
  \end{equation*}
  The same inequality, with the endpoints reversed holds for $Y<T$. Continuous differentiability as function of the radius was established through dominated convergence in the proof of Proposition \ref{new transform bound}. Dividing by $|Y-T|$ and passing to the limit gives $|\partial_Th_T(\m)|\leq \partial _T\hbox{vol}(B_T)$. Multiplication by $T$ and Lemma \ref{ball estimates} proves the second estimate. In particular, no separate smooth-boundary assertion for the Frobenius ball is needed. 
\end{proof}

\section{Root products and spectral estimates}

The transform estimate contains an anisotropic factor. Its interaction with the local spectral density is the reason a crude dimension count would give the wrong answer. We record this interaction before estimating any spectral sums.
\\\\
Recall 
\begin{equation*}
    \mathcal{W}(\t)=\prod_{1\leq i<j\leq n}(1+|\t_i-\t_j|),\quad U=1+\|\t\|_\infty .
\end{equation*}

The factor $\mathcal{G}(\t)$ in \eqref{G definition} is the sum of the two endpoint products. As before we order the imaginary parts $\t_1\geq ...\geq \t_n$ and put
 \begin{equation*}
        P_{+}=\prod_{j=2}^{n}(1+\t_1-\t_j),\quad P_{-}=\prod_{j=1}^{n-1}(1+\t_j-\t_n).
    \end{equation*}
Then it is straightforward that $\mathcal{G}=P_{+}^{-1/2}+P_{-}^{-1/2}$.

\begin{lemma}\label{spectral cancellation}
   Uniformly for all real $\t$, with $\sum_{j}\t_j=0$, including the walls of Weyl chambers, we have the bound
    \begin{equation}\label{cancellation in spectral}
        \mathcal{G}^2(\tau) \mathcal{W}(\tau)\ll_n U^{d/2-r}.
    \end{equation}
\end{lemma}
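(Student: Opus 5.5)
We plan to prove \eqref{cancellation in spectral} by direct comparison of products, with no analytic input: each endpoint product $P_\pm$ is literally a sub-product of the root-product weight $\mathcal{W}(\t)$. Since $\mathcal{W}$ is symmetric under permutations of the coordinates, we may assume the ordering $\t_1\geq\dots\geq\t_n$ used to define $P_\pm$, with no loss of generality. Walls of Weyl chambers (coinciding coordinates) cause no trouble, because only upper bounds on the factors $1+|\t_i-\t_j|$ will be used, never lower bounds.

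First, from $\mathcal{G}=P_+^{-1/2}+P_-^{-1/2}$ and $(x+y)^2\leq 2x^2+2y^2$ we get
\begin{equation*}
\mathcal{G}^2(\t)\mathcal{W}(\t)\leq 2\,\frac{\mathcal{W}(\t)}{P_+}+2\,\frac{\mathcal{W}(\t)}{P_-}.
\end{equation*}
With the ordering, the factors of $\mathcal{W}$ indexed by the pairs $(1,j)$, $2\leq j\leq n$, are exactly $1+\t_1-\t_j$, so their product is $P_+$. Hence
\begin{equation*}
\frac{\mathcal{W}(\t)}{P_+}=\prod_{2\leq i<j\leq n}(1+|\t_i-\t_j|).
\end{equation*}
Similarly, the pairs $(j,n)$ with $1\leq j\leq n-1$ give exactly $P_-$. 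So $\mathcal{W}/P_-$ is the product over the pairs $1\leq i<j\leq n-1$.

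Each remaining factor satisfies $1+|\t_i-\t_j|\leq 1+2\|\t\|_\infty\leq 2U$. In both cases the number of remaining factors is $\binom{n-1}{2}=\frac{(n-1)(n-2)}{2}$. Since $d=n(n-1)$ and $r=n-1$,
\begin{equation*}
\frac{d}{2}-r=\frac{n(n-1)}{2}-(n-1)=\frac{(n-1)(n-2)}{2}.
\end{equation*}
Therefore each quotient is at most $(2U)^{d/2-r}\ll_n U^{d/2-r}$, and adding the two bounds gives \eqref{cancellation in spectral}.

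There is no genuine obstacle here. The only point needing care is to check that the indices removed by $P_+$ (respectively $P_-$) are exactly one row of the pair set, namely all pairs containing the index $1$ (respectively $n$). This is what makes the count of remaining factors exactly $d/2-r$, rather than merely bounded by it.
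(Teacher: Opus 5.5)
Your proof is correct and follows the paper's own argument: apply $(x+y)^2\leq 2x^2+2y^2$, note that $P_+$ (resp.\ $P_-$) is exactly the sub-product of $\mathcal{W}$ over the $r=n-1$ pairs containing the index $1$ (resp.\ $n$), and bound each of the remaining $\binom{n-1}{2}=d/2-r$ factors by $2U$. Your explicit check of which pairs are cancelled, and why coinciding coordinates cause no trouble, spells out what the paper states in a single line.
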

\begin{proof}
    The elementary inequality $(a+b)^2\leq 2a^2+2b^2$ gives
    \begin{equation*}
        \mathcal{G}^2\mathcal{W}\leq 2\frac{\mathcal{W}}{P_+}+2\frac{\mathcal{W}}{P_-}.
    \end{equation*}
    Each quotient cancels exactly $r=n-1$ of the $d/2$ factors in $\mathcal{W}$. Every remaining factor is at most $2U$ and there are $d/2-r$ of them. This proves 
\eqref{cancellation in spectral}. The use of $1+|\t_i-\t_j|$ rather than a homogeneous power of the differences is what makes the proof valid at collisions.
\end{proof}

For example, when $n=3$ write $A=1+\t_1-\t_2$, $B=1+\t_2-\t_3$ and $C=1+\t_1-\t_3$. Then
\begin{equation*}
\mathcal{W}= ABC,\quad \mathcal{G}=(AC)^{-1/2}+(BC)^{-1/2},\quad \mathcal{G}^2\mathcal{W}=(\sqrt{A}+\sqrt{B})^2\ll U.
\end{equation*}
This concrete identity illustrates the cancellation without any assumption that the parameters are in the interior of a chamber.
\\\\
Let dyadic $L\in\{1,2,4,...\}$ and recall the full non-constant spectral packet
\begin{equation*}
    \Omega_{L}'=\{\omega\in\Omega':L\leq U(\mu_\omega)\leq 2L\}.
\end{equation*}
Here $\Omega'$ is the full spectral parameter space with the constant representation removed: it includes every continuous spectrum measure and every multiplicity label. These packets are disjoint and cover $\Omega'$.

In this subsection we exploit the cancellation in Lemma \ref{cancellation in spectral} to obtain a bound for 
\begin{equation*}
    \int_{\Omega_L'}\mathcal{G}^2(\t_\omega)|u_{\omega}(w)|^2\,d\omega.
\end{equation*}
As mentioned in the proof of Corollary \ref{blomer local weyl} we cover a shell by $O_n(L^r)$ balls of fixed radius about purely imaginary parameters.
On a ball with fixed radius, the quantities $U$ and $1+|\t_i-\t_j|$ are comparable to their values at the center, with constants depending on the radius and on $n$. As for the endpoint products, we use the fact that $\max$ and $\min$ functions are Lipschitz.  Applying the local spectral estimate of Proposition \ref{local spectral density} and then Lemma \ref{spectral cancellation} we obtain:
\begin{eqnarray}\label{first spectral mass on shells}
\int_{\Omega_L'}\mathcal{G}^2(\t_\omega)|u_{\omega}(w)|^2\,d\omega &=& \sum_{balls}\int_{B(\t_\omega)}G^2(\t_{\omega)}|u_{\omega}(w)|^2\,d\omega \nonumber \\ 
&\ll_n& \sum_{balls}\|G^2(\t_\omega)\|_\infty \int_{B(\tau_\omega)}|u_{\omega}(w)|^2\,d\omega \\ 
&\ll_{n,w}& \sum_{balls}\|G^2(\t)\mathcal{W}(\tau)\|_\infty\ll_{n,w}L^r U^{d/2-r}\ll _{n,w}L^{d/2}.  \nonumber
\end{eqnarray}

\section{A maximal estimate on a multiplicative radius interval}

Fix $w\in X$ for the remainder of the proof. Every implicit constant in this section may depend on this point. Choose once and for all 
\begin{equation*}
0<a<2, \quad Q\geq Q_0,\quad\delta=Q^{-a},\quad \k=\frac{2-a}{2},\quad L_0=Q^{2-\k}.
\end{equation*}
Parametrize the two comparison radii by
\begin{equation*}
    T_{\pm}(t)=Qe^{t}(1+\delta)^{\pm1},\quad0\leq t\leq 1,
\end{equation*}
and define the smoothed error terms
\begin{equation}\label{smoothed error}
    E_{Q}^{\pm}(t,z)=N_\delta^{\pm}(Qe^t;z,w)-M_0(T_\pm(t)).
\end{equation}
With the convention \eqref{pm notations}, for $T=Qe^t$ the displayed $M_0$ is exactly the constant term, as we have argued in \eqref{smoothed constant}. Therefore the spectral expansion  is 
\begin{equation}\label{expansion of smoothed error}
    E_{Q}^{\pm}(t,z)=\int_{\Omega'}h_{T_\pm(t)}(\m_\omega)\fpsi(\m_\omega)\overline{u_\omega(z)}u_\omega (w)\,d\omega.
\end{equation}
In the following subsections our goal is to set up the necessary tools for the proof of the following key result. 
\begin{proposition}[The maximal estimate]  \label{maximal estimate}
For every $a\in(0,2)$ and every $\eta>0$
\begin{equation*}
    \left\|\sup_{0\leq t\leq 1}|E_{Q}^{\pm}(t,\cdot)|\right\|_{L^2(X)}\ll_{w,n,a,\eta}Q^{b+\eta}(1+\log Q)
\end{equation*}
with $b=d-\frac{n}{2}$, for both choices of sign.
\end{proposition}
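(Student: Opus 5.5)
The plan is to pass from the supremum over $t\in[0,1]$ to $L^2$ norms of $E_Q^{\pm}$ and its $t$-derivative by a unit-interval Sobolev inequality, and then to control each of these by the contraction property of the conjugated synthesis \eqref{contraction}. For a $C^1$ function $f$ on $[0,1]$ we have $\sup_t|f(t)|\le |f(0)|+\int_0^1|f'(t)|\,dt$. Applying this pointwise in $z$, then Minkowski's inequality in $L^2(X)$, gives
\begin{equation*}
\Big\|\sup_{0\le t\le 1}|E_Q^{\pm}(t,\cdot)|\Big\|_2\le \|E_Q^{\pm}(0,\cdot)\|_2+\sup_{0\le t\le1}\|\partial_tE_Q^{\pm}(t,\cdot)\|_2 .
\end{equation*}
Since $T_\pm(t)=Qe^t(1+\delta)^{\pm1}$, we have $\partial_t=D$ acting on $h_{T_\pm(t)}$. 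Differentiation passes under the spectral integral \eqref{expansion of smoothed error} thanks to the rapid decay of $\fpsi$ (Lemma \ref{mollifier decay properties}) together with Remark \ref{remark34}. Consequently $\partial_tE_Q^\pm$ is the conjugated synthesis of the coefficients $D^qh_{T_\pm(t)}(\m_\omega)\fpsi(\m_\omega)u_\omega(w)$ with $q=1$. By \eqref{contraction}, for $q=0,1$ it therefore suffices to prove
\begin{equation*}
I_q:=\int_{\Omega'}|D^qh_{T}(\m_\omega)|^2|\fpsi(\m_\omega)|^2|u_\omega(w)|^2\,d\omega\ll Q^{2b+2\eta}(1+\log Q)^2,
\end{equation*}
uniformly for $T\asymp Q$.

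Next I decompose $\Omega'$ into the dyadic shells $\Omega'_L$ of \eqref{dyadic shells}. In the range $L\le L_0=Q^{2-\k}$, Proposition \ref{new transform bound} applies with $C$ fixed, since $T\asymp Q$. Combining it with Proposition \ref{non constant bound}, which gives $\beta\le (n-2)/2$, the factor $T^{n\beta}U^{-\beta/2}$ is maximized at $\beta=(n-2)/2$ because $U\ll T^2$. This yields
\begin{equation*}
|D^qh_T|^2\ll Q^{d+n(n-2)+2\eta}L^{2q-d/2-n/2}\mathcal{G}^2(\t).
\end{equation*}
Integrating over the shell with \eqref{first spectral mass on shells} (weight $L^{d/2}$) and using $|\fpsi|\ll(1+\delta L)^{-A}$ gives the shell contribution
\begin{equation*}
Q^{2d-n+2\eta}\,L^{2q-n/2}(1+\delta L)^{-2A}.
\end{equation*}
Note that $d+n(n-2)=2d-n=2b$. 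For $q=0$ the exponent $-n/2$ is negative, so the sum over $L$ is $O(Q^{2b+2\eta})$. For $q=1$ the exponent is $2-n/2$. This is $\le 0$ when $n\ge4$, where the sum costs at most a $\log Q$ (the case $n=4$). For $n=3$ it is $1/2$, and the sum is dominated by $L\approx\delta^{-1}=Q^{a}$, giving $Q^{a/2}$. Here I expect a genuine loss that must be absorbed, either through the freedom in $\eta$ and the choice of $a$, or by handling the $t$-derivative via $\delta$-smoothing. This step is the main obstacle. My first attempt would be to note that with $a$ small the loss is $Q^{a/2}$, and to check whether the later Borel–Cantelli argument tolerates it. If not, I would refine the Sobolev step by interpolating: use $\|f\|_\infty^2\le \|f\|_2^2+2\|f\|_2\|f'\|_2$, so that only the geometric mean of the $q=0$ and $q=1$ norms enters. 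That gives $L^{1-n/2}$ per shell, which is summable for every $n\ge3$. I would adopt this interpolated inequality from the start.

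Finally, the tail $L>L_0$ is handled with the coarse bounds of Lemma \ref{coarse bounds}, namely $|D^qh_T|\ll Q^d$, together with Corollary \ref{blomer local weyl}. The shell contribution is then
\begin{equation*}
Q^{2d}L^{d/2+r}(\delta L)^{-2A},
\end{equation*}
and since $\delta L\ge Q^{2-\k-a}=Q^{(2-a)/2}\to\infty$, choosing $A$ large in terms of $a$ and $n$ makes it $O(Q^{-B})$. The number of dyadic shells is $O(1+\log Q)$, which accounts for the logarithmic factor in the statement. Taking square roots and collecting terms gives the bound $Q^{b+\eta}(1+\log Q)$ for both signs.
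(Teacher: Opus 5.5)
Your proposal is correct. Once you adopt the geometric-mean inequality $\|f\|_\infty^2\le\|f\|_2^2+2\|f\|_2\|f'\|_2$ shell by shell and add the shell norms by Minkowski, it is essentially the paper's proof: dyadic shells, Proposition \ref{new transform bound} with $q=0,1$, the contraction combined with \eqref{first spectral mass on shells}, and coarse bounds with mollifier decay for $L>L_0$. The one difference is that you keep the factor $U^{-\beta/2}$, which the paper discards, so your bound on each shell decays like $Q^{b+\eta}L^{(2-n)/4}$ and the shells sum without the $\log Q$; in the paper the bound on each shell is flat, $Q^{b+\eta}$, and the logarithm simply counts the $O(1+\log Q)$ shells.
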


\subsection{A unit-interval Sobolev inequality}

The following Lemma is a key part of the proof of our main theorem, as mentioned already in the introduction. 

\begin{lemma}\label{sobolev inequality}
    If $f\in H^1([0,1];\C),$ with its absolutely continuous representative, then 
    \begin{equation}\label{sobolev 1}
        \sup_{0\leq t\leq 1}|f(t)|^2\leq \int_{0}^{1}|f(t)|^2\,dt+2\left(\int_{0}^{1}|f(t)|^2\,dt\right)^{1/2}\left(\int_{0}^{1}|f'(t)|^2\,dt\right)^{1/2}.
    \end{equation}
Consequently if $F,\partial_tF\in L^2([0,1]\times X)$, then 
    \begin{equation}\label{sobolev 2}
        \left\|\sup_{0\leq t\leq 1}|F(t,\cdot)|\right\|_2^2\leq A_0+2\sqrt{A_0A_1},
        \end{equation}
with        \begin{equation*}A_q=\int_{0}^{1}\|\partial_t^qF(t,\cdot)\|_2^2\,dt,
    \end{equation*}
    where the supremum uses the absolutely continuous representative for almost every point in $X$.
\end{lemma}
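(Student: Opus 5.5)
The plan is to prove the one-variable inequality \eqref{sobolev 1} first, by the fundamental theorem of calculus applied to $|f|^2$, and then obtain \eqref{sobolev 2} by applying \eqref{sobolev 1} pointwise in $x\in X$ and integrating.

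\textbf{Step 1: the inequality \eqref{sobolev 1}.} For $f\in H^1([0,1])$ the absolutely continuous representative has $g=|f|^2$ absolutely continuous, with
\[
g'=2\Re(\overline{f}f')
\]
almost everywhere. Since $g$ is continuous, there is a point $t_0$ with $g(t_0)=\int_0^1 g$; this is the mean value theorem for integrals. For any $t\in[0,1]$,
\[
g(t)=g(t_0)+\int_{t_0}^{t}g'(s)\,ds\leq \int_0^1|f|^2+2\int_0^1|f||f'|.
\]
Applying Cauchy--Schwarz to the last integral gives \eqref{sobolev 1}.

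\textbf{Step 2: the inequality \eqref{sobolev 2}.} Let $F,\partial_tF\in L^2([0,1]\times X)$. By Fubini, for almost every $x$ the slice $t\mapsto F(t,x)$ lies in $L^2$ and its distributional derivative is the slice of $\partial_tF$, which also lies in $L^2$. Hence the slice is in $H^1([0,1])$ and has an absolutely continuous representative. One checks this by pairing with test functions of the form $\varphi(t)\chi(x)$, and uses separability of $C_c^\infty(0,1)$ to get a single null set of exceptional $x$.

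For such $x$, set
\[
a_q(x)=\int_0^1|\partial_t^qF(t,x)|^2\,dt .
\]
Then \eqref{sobolev 1} gives
\[
\sup_t|F(t,x)|^2\leq a_0(x)+2\sqrt{a_0(x)a_1(x)}.
\]
Integrate over $X$ and apply Cauchy--Schwarz in $x$:
\[
\int_X\sqrt{a_0a_1}\leq\Bigl(\int_X a_0\Bigr)^{1/2}\Bigl(\int_X a_1\Bigr)^{1/2}=\sqrt{A_0A_1},
\]
where the last equality is Tonelli. This yields \eqref{sobolev 2}.

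\textbf{Main obstacle.} The only delicate point is measure-theoretic: the supremum over the continuous representative must be a measurable function of $x$. Continuity lets us restrict the supremum to rational $t$. We also need a jointly measurable version of the representative. One such version is
\[
F(t_0,x)+\int_{t_0}^t\partial_tF(s,x)\,ds,
\]
which is Borel in $(t,x)$ and, for almost every $x$, agrees with $F(t,x)$ for almost every $t$. Everything else is routine.
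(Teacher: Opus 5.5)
Your proof is correct and follows essentially the same route as the paper. Both arguments pick a point $t_0$ where $|f(t_0)|^2$ is at most the mean of $|f|^2$, apply the fundamental theorem of calculus to $|f|^2$ together with Cauchy--Schwarz, then apply this slicewise and integrate over $X$ using Cauchy--Schwarz in the space variable. Your measurability discussion goes beyond what the paper writes; one small fix there is that in $F(t_0,x)+\int_{t_0}^t\partial_tF(s,x)\,ds$ you must fix a Borel representative of $F$ and take $t_0$ outside a null set, since $F(t_0,\cdot)$ is not otherwise well defined.
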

\begin{proof}
     There is a $t_0\in[0,1]$ such that $|f(t_0)|^2\leq \int_{0}^{1}|f|^2$ (otherwise the integral would be strictly smaller than every value of its continuous integrand). Applying the fundamental theorem of calculus and the identity $(|f|^2)'=2\Re(f'\bar{f})$, imply that for every $t\in[0,1]$:
    \begin{equation*}
        |f(t)|^2\leq |f(t_0)|^2+2\int_{0}^{1}|f'(s)|\,|f(s)|\,ds.
    \end{equation*}
A straightforward application of Cauchy-Schwarz proves \eqref{sobolev 1}. The same proof applies to $H^1$ functions because they have absolutely continuous representatives and satisfy the fundamental theorem of calculus.

Fubini's theorem for Sobolev functions gives, for almost every $z\in X$, an $H^1$ function $t\to F(t,z)$ with the indicated weak derivative. Applying \eqref{sobolev 1}, integrating in $z$ and using the Cauchy-Schwarz inequality in the $z-$integral for the product of the two square roots, we derive the proof of \eqref{sobolev 2}.
\end{proof}

The usefulness of this elementary estimate is the geometric mean $\sqrt{A_0A_1}$: a factor $L^{-1}$ in $A_0$ can balance a factor $L$ in $A_1$. Bounding a supremum merely by the full $H^1$ norm would miss this balance (see \cite[Remark~3.5]{chatzakosdimakis} for the same phenomenon in the rank one case).

\subsection{The low and intermediate spectral packets}

Let $E_{Q,L}^{\pm}$ denote the integral in \eqref{expansion of smoothed error}, restricted to $\Omega_L'$ defined in \eqref{dyadic shells}. First suppose $L\leq L_0$. Since $T_{\pm}(t)\asymp Q$, we have $U\leq 2L\ll T_{\pm}(t)^{2-\k}$, uniformly in $t$. Proposition \ref{new transform bound} therefore applies for both $q=0,1$. The non constant bound of Proposition \ref{non constant bound} gives 
\begin{equation*}
    d/2+n\b\leq d/2+n\frac{(n-2)}{2}=d-\frac{n}{2}=b.
\end{equation*}
Since $U\geq 1$, we may discard the additional factor $U^{-\beta/2}$. Also $|\fpsi|\leq 1$ and $|\phi_\m|\leq 1$. It follows that 
\begin{equation*}
    |D^q h_{T_\pm(t)}(\m)\hat{\psi}_\delta(\mu)|\ll Q^{b+\eta}L^{q-d/4-1/2}\mathcal{G}(\t),\quad \text{for}\quad q=0,1.
\end{equation*}
Here $D=T_{\pm}\partial_{T_{\pm}}$. Then chain rule gives $\partial_t h_{T_{\pm(t)}}=T_{\pm}(t)\partial_{T_{\pm}(t)}h_{T_\pm}(t)=Dh_{\pm}(t)$.
Taking $\partial_t^q$ to the spectral expansion of $E_{Q,L}^{\pm}$, applying Plancherel contraction for the operator defined in \eqref{conjugated synthesis operator} with $h_{T_\pm}\cdot\psi_\delta$ in place of $h_f$  and using \eqref{first spectral mass on shells} we get 
\begin{eqnarray*}
A_{q,L}&:=&\int_{0}^{1}\|\partial_{t}^{q}E_{Q,L}^{\pm}(t,\cdot)\|_2^2\,dt \notag \\
&\ll& Q^{2b+2\eta}L^{2q-d/2-1}\int_{\Omega_L'}G(\t_{\omega})^2 |u_{\omega}(w)|^2\,d\omega \\
&\ll& Q^{2b+2\eta}L^{2q-1} .   
\end{eqnarray*}
Therefore with the notation in \eqref{sobolev 2} we  
\begin{equation*}
    A_{0,L}\ll Q^{2b+2\eta}L^{-1},\quad A_{1,L} \ll Q^{2b+2\eta}L.
\end{equation*}
Applying Lemma \ref{sobolev inequality} gives 
\begin{equation*}
  \left\|\sup_{t\in [0,1]}|E_{Q,L}^{\pm}(t,\cdot)|\right\|_2\ll Q^{b+\eta},\quad \text{for}\quad L\leq L_0.
\end{equation*}

\subsection{The high spectral tail}

In this part we deal with the tail of the spectrum, for $L>L_0$. In this case the sharp transform is not needed and the coarse bounds of Lemma \eqref{coarse bounds} suffice. Also recall the mollifier bounds of Lemma \eqref{mollifier decay properties} and the spectral mass in shells \eqref{coarse shell bound}, then for any $A>0$ apply the same strategy as in the previous subsection and obtain
\begin{equation*}
    A_{q,L}\ll Q^{2d}L^{d}(\delta L)^{-2A},\quad \text{for}\quad   q=0,1.
\end{equation*}
Therefore Lemma \ref{sobolev inequality} implies
\begin{equation}\label{tail sup estimate}
    \left\|\sup_{t\in[0,1]}|E_{Q,L}^{\pm}(t,\cdot)|\right\|_2\ll Q^{d}L^{(d/2+r)/2}(\delta L)^{-A}.
\end{equation}

The separation between the smoothing cutoff and the endpoint of the sharp range is essential:
\begin{equation*}
    \delta L_0=Q^{2-a-\k}=Q^{\frac{2-a}{2}}= Q^{\k},\quad \k>0.
\end{equation*}
Writing $L=L_0 v$, the right hand side of \eqref{tail sup estimate} is equal to
\begin{equation*}
    Q^{d+(2-\k)(d/2+r)/2-A\k}\,v^{(d/2+r)/2-A}.
\end{equation*}
For $A$ large enough, summation over the dyadic $L>L_0$ yields a convergent series (summing over $v$). For any desired $B>0$ we can increase $A$ further (since it is arbitrary) and thus we can bound the sum by $O_{a,B,n} (Q^{-B})$. This is the only place where we need arbitrarily rapid decay of the mollifier.

\subsection{Convergence and the supremum over all real radii} 
We now supply the functional-analytic details that allow the packet estimates to be used for the actual automorphic kernel. For each packet, its spectral coefficient and its $t$-derivative belong to $L^2([0,1]\times\Omega)$ by the preceding estimates. The bounded synthesis operator therefore implies 
\begin{equation*}
    E_{Q,L}^{\pm}\in H^1([0,1];L^2(X)).
\end{equation*}
We can verify the derivative assertion by testing against a smooth compactly supported scalar function of $t$, integrating by parts in the coefficient space and applying the bounded synthesis map. At this step we do not use any pointwise assertion about individual Eisenstein series as functions in $L^2(X)$.

Choose the scalar absolutely continuous representative given by Fubini for these countably many packets and set 
\begin{equation*}
g_{L}(z)=\sup_{t\in[0,1]}|E_{Q,L}^{\pm}(t,z)|. 
\end{equation*}
Each $g_L$ is measurable: for continuous representatives the supremum is the supremum over the rationals in $[0,1]$. There are $O(1+\log Q)$ packets with $L\leq L_0$ and the high tail is summable. Hence
\begin{equation*}
    \sum_{L}\|g_L\|_2\ll Q^{b+\eta}(1+\log Q).
\end{equation*}
Minkowski's inequality for finite sums and monotone convergence imply 
\begin{equation*}
\sum_{L} g_L \in L^2(X). 
\end{equation*}
Particularly for almost every $z$ the packet series converges absolutely and uniformly for $0\leq t\leq 1$. It defines a continuous function of $t$ at such a point.

For each fixed $t$ the spectral expansion of the smooth convolution kernel identifies the packet sum with \eqref{smoothed error}. This equality initially holds in $L^2(X)$. The actual kernel is square integrable: for fixed $w$ its support in $z$ lies in the projection of 
\begin{equation*}
g_w\text{supp}(\chi_{S_{\pm}(t)}\ast\psi_\delta),
\end{equation*}
(which is a compact subset of $X$) and its periodization is locally finite and smooth in $z$. Subtracting its constant mean preserves square integrability because $\hbox{vol}(X)<\infty$.

The automorphic kernel is also continuous in $t$, since the characteristic function $\chi_T$ depend continuously on $T$ in $L^1(G)$. Convolution with $\psi_\delta$ consequently varies uniformly on compact sets and periodization is locally a finite sum, uniformly for $t\in[0,1]$. The function $M_0$ is continuously differentiable. Thus equality at every rational $t$ outside a countable union of null sets in $z$ extends to every real $t$ by continuity. This justifies \eqref{expansion of smoothed error} in the form needed for a maximal function. Finally the bound
\begin{equation*}
    \sup_{t\in[0,1]}|E_{Q}^{\pm}(t,z)|\leq \sum_{L}g_L(z)
\end{equation*}
implies Proposition \ref{maximal estimate}.
\begin{remark}
Our original counting function has jumps and is not continuous. 
The assertion of continuity used above concerns only the smoothed kernels. Its values at the jump radii is controlled by the pointwise double inequality \eqref{sandwich}, which was proved with closed balls. Thus no exceptional set of radii is introduced when we apply the unsmoothing step.
\end{remark}

\section{The almost-everywhere estimate and unsmoothing}

We can now proceed to the proof of Theorem \ref{ourmainthm}. For this purpose, let $a\in(0,2)$ be fixed and $\zeta>0$. For (sufficiently large) integers $k$ let us also set $Q_k=e^k$ and $\delta_k=Q_k^{-a}$, 

\subsection{A summable family of exceptional sets} For either sign we consider the sets
\begin{equation*}
   \mathcal{E}_{k}^{\pm}=\left\{z\in X:\sup_{0\leq t \leq 1}|E_{Q_k}^{\pm}(t,z)|>Q_k^{b+\zeta}\right\}.
\end{equation*}
Applying the same argument as in the proof of Proposition \ref{maximal estimate} we deduce that the sets $\mathcal{E}_{k}^{\pm}$ are measurable. We apply Proposition \ref{maximal estimate} with $\eta=\zeta/2$. Chebyshev's inequality gives
\begin{equation*}
    \m_{X}(\mathcal{E}_{k}^{\pm})\ll Q_{k}^{-2b-2\zeta}Q_k^{2b+\zeta}(1+\log Q_k)^2=e^{-k\zeta}(1+k^2).
\end{equation*}
Hence, summing over $k$ we get a convergent series. The first Borel-Cantelli lemma (which requires no independence) implies that for almost every $z$, only finitely many of the sets $\mathcal{E}_k^{+}\cup\mathcal{E}_{k}^{-}$ contain $z$. Equivalently, for almost every $z$ there exists a  $k_0(z,w,a,\zeta)$ such that for both signs:
\begin{equation}\label{Borel part}
    \sup_{t\in[0,1]}|E_{Q_k}^{\pm}(t,z)|\leq Q_k^{b+\zeta},\quad \text{for all} \quad k\geq k_0.
\end{equation}
The possible null sets used to identify spectral and geometric representatives for each $k$ are also removed. As a countable union of sets with zero measure, this forms again a set of zero measure.

\subsection{Bounding the error term}
Given any sufficiently large real $T$ we take $k=\lfloor\log T\rfloor$ and we write $T=Q_ke^{t}$ (for $ 0\leq t\leq 1$). Then inequality \eqref{sandwich 2} gives
\begin{equation*}
    N_{\delta_k}^{-}(T;z,w)\leq N_n(T;z,w)\leq N_{\delta_k}^{+}(T;z,w).
\end{equation*}
Subtracting $M_0(T)$ from all three sides, the two exterior terms take the form 
\begin{equation*}
    E_{Q_k}^{\pm}(t,z)+M_0(T(1+\delta_k)^{\pm 1})-M_0(T)
\end{equation*}
respectively. Estimate \eqref{main term approximation} gives
\begin{equation*}
|M_0(T(1+\delta_k)^{\pm 1})-M_0(T)|\ll\delta_kT^{d}\ll T^{d-a}.
\end{equation*}
Together with \eqref{Borel part} this proves the intermediate bound 
\begin{equation*}
    E_0(T;z,w)\ll_{z,w,n,a,\zeta} T^{b+\zeta}+T^{d-a}
\end{equation*}
for all sufficiently large real $T$ and for almost every $z$ and $a,\zeta,w$ fixed. Lemma \ref{ball estimates} adds the factor $M_0(T)-M_n(T)=O_{n,\eta}(T^{d-n/2+\eta})$. We conclude
\begin{equation}\label{true error upper bound before balancing}
    E(T;z,w)\ll T^{b+\zeta}+T^{d-a}+T^{d-n+\eta}.
\end{equation}
These three terms are coming from three different sources: the spectral maxima estimation, the geometric smoothing and the lower terms of the ball volume.

\subsection{Completion of the proof of Theorem \ref{ourmainthm}}

Let us first consider the case $n=3$. We choose $a=3/2$ so that $d-a=b=9/2$. Given $\epsilon>0$, we take $0\leq \zeta\leq \e$ and we choose $\eta>0$ small enough that $d-n+\eta<b+\e$. 
Then \eqref{true error upper bound before balancing} gives
\begin{equation*}
E_3(T;z,w)\ll T^{9/2+\epsilon}
\end{equation*}
for every $\epsilon >0$.

Now let $n\geq 4$. It suffices to prove the claim for $0<\e<1$. We choose $a=2-\epsilon/2$, $\zeta=\e/2$ and $\eta=\e/2$. Then
\begin{eqnarray*}
b+\zeta &\leq& d-2+\epsilon, \\ 
d-a &=& d-2+\epsilon/2, \\
d-n+\eta &\leq& d-2+\epsilon/2.
\end{eqnarray*}
Thus \eqref{true error upper bound before balancing} gives 
\begin{equation*}
E_n(T;z,w)\ll T^{d-2+\e}
\end{equation*}
for every $\epsilon>0$ and for every $n \geq 4$. Notice that the condition $a<2$ holds; since $a=2$ would destroy the positive gap needed for the spectral tail.

So far the full measure set may depend on $\e$. We now apply the proved assertion successively with $\e_j=1/j$ for $j\geq 2$ and we intersect the resulting full measure sets. Their intersection $B_w$ has full measure. Given any $\e>0$, choose $j$ with $1/j<\e$; the bound with $\e_j$ implies the desired bound with $\e$, for $T\geq 1$. This completes the proof of Theorem \ref{ourmainthm}.

\begin{remark}
If we fix the point $z$ instead of $w$ and view the pre-trace formula \eqref{pretrace formula} as a function of $w$, then using the ordinary synthesis \eqref{regular synthesis operator} (instead of the conjugated synthesis) and applying the same spectral bound \eqref{local spectral density} at the fixed point $z$, we deduce Theorem \ref{ourmainthm} for almost every $w$ (depending on $z$).
\end{remark}

\section{A lower bound for the error term}

We conclude with the proof of Theorem \ref{omegaresult}. Although the strength of this $\Omega$-result in analogous with the strength of the known $\Omega$-results for the classical Euclidean and hyperbolic lattice counting problems in large dimensions, it is worth mentioning that the proof in the higher rank case is much more elementary. 

\begin{proof}
For $z= I_n = w$ we have 
\begin{equation*}
N_n (T;I_n, I_n)  = \#\left\{ \gamma  \in \hbox{SL}_n (\mathbb{Z}) : \| \gamma \|_F \leq T \right\}.
\end{equation*}
If we denote $\gamma = (\gamma_{ij})$ then 
\begin{equation*}
\|\gamma\|_F^2 = \sum_{i,j} \gamma_{i,j}^2 \in \mathbb{Z}. 
\end{equation*}
Hence, when $T^2$ stays between two consecutive integers $\ell$ and $\ell+1$, the function $N_n(T; I_n, I_n)$ is constant. Write 
\begin{equation*}
T_1 = \sqrt{\ell+\frac{1}{4}}, \ T_2 = \sqrt{\ell+\frac{3}{4}}.
\end{equation*}
Since $N_n (T_1; I_n, I_n) = N_n (T_2; I_n, I_n)$, using the mean value theorem, by \eqref{sharperror} for some $T \in [T_1, T_2]$ we get
\begin{equation*} 
E_n(T_1; I_n, I_n) - E_n (T_2, I_n, I_n) = c_n \left(T_2^{n(n-1)} - T_1^{n(n-1)}\right) \asymp  T ^{n(n-1)-2}.
\end{equation*}
Hence 
\begin{equation*} 
\max \left\{|E_n(T_2; I_n, I_n)|, |E_n (T_1, I_n, I_n)| \right\} \gg  T^{n(n-1)-2},
\end{equation*}
and the desired result follows.
\end{proof}

\end{document}